\newcommand{\kommentar}[1]{}
\newtheorem{lem}{Lemma}[section]
\newtheorem{prop}[lem]{Proposition}
\newtheorem{thm}[lem]{Theorem}
\theoremstyle{definition}
\numberwithin{equation}{section}
\begin{document}

\title[Negative moments of the derivative of the zeta function]{Negative discrete moments of the derivative \\ of the Riemann zeta-function}

\author[H. M. Bui, A. Florea, and M. B. Milinovich]{Hung M. Bui, Alexandra Florea, and Micah B. Milinovich}
\address{Department of Mathematics, University of Manchester, Manchester M13 9PL, UK}
\email{hung.bui@manchester.ac.uk}
\address{Department of Mathematics, UC Irvine, Irvine, CA 92697, USA}
\email{floreaa@uci.edu}
\address{Department of Mathematics, University of Mississippi, University, MS 38677, USA}
\email{mbmilino@olemiss.edu}

\begin{abstract}
We obtain conditional upper bounds for negative discrete moments of the derivative of the Riemann zeta-function averaged over a subfamily of zeros of the zeta function which is expected to have full density inside the set of all zeros. For $k\leq 1/2$, our bounds for the $2k$-th moments are expected to be almost optimal. Assuming a conjecture about the maximum size of the argument of the zeta function on the critical line, we obtain upper bounds for these negative moments of the same strength while summing over a larger subfamily of zeta zeros.

\end{abstract}

\subjclass[2010]{11M06, 11M50}
\keywords{Riemann zeta-function, moments, negative moments, discrete moments.}

\allowdisplaybreaks

\maketitle

\section{Introduction}
Let $\zeta(s)$ denote the Riemann zeta-function. In this paper, we study negative discrete moments of the derivative $\zeta(s)$ of the form
\[
J_{-k}(T) = \sum_{T < \gamma  \le 2T} |\zeta'(\rho)|^{-2k},
\]
where $k \in \mathbb{R}$ and $\rho=\beta+i\gamma$ runs over the non-trivial zeros of $\zeta(s)$. We will be interested in the case $k>0$, where we note that $J_{-k}(T)$ is only defined if the zeros $\rho$ are all simple. 

\smallskip

Gonek \cite{gonek} and Hejhal \cite{H} independently conjectured
that
\begin{equation}
J_{-k}(T) \asymp T (\log T)^{(k-1)^2},
\label{conj_jk}
\end{equation}
as $T \to \infty$ and for every real number $k$.  Using a random matrix theory model for $\zeta(s)$, Hughes, Keating, and O'Connell \cite{rmt_model} have refined this conjecture to predict that
$$J_{-k}(T) \sim C_k \, T  (\log T)^{(k-1)^2},$$ for a specific constant $C_k$, when $k \in \mathbb{C}$ and $\Re(k) < 3/2$. This conjecture was recovered by Bui, Gonek, and Milinovich \cite{BGM} using a hybrid Euler-Hadamard product for $\zeta(s)$. The random  matrix model for $\zeta(s)$, as well as unpublished work of Gonek, suggests that there are infinitely many zeros $\rho$ such that $|\zeta'(\rho)|^{-1} \gg |\gamma|^{1/3-\varepsilon}$, for any $\varepsilon>0$. This, in turn, would imply that for $k>0$, $J_{-k}(T) = \Omega(T^{2k/3-\varepsilon})$, suggesting that \eqref{conj_jk} only holds for $k<3/2$. However, it is noted in \cite{rmt_model} that while \eqref{conj_jk} is not expected to hold for $k \geq 3/2$, it is plausible that \eqref{conj_jk} could hold when summing over a subfamily of zeros  of full density inside the set of zeros with ordinates between $T$ and $2T$. Indeed, if one were to redefine $J_{-k}(T)$ to exclude a set of rare points where $|\zeta'(\rho)|$ is very close to $0$, then \eqref{conj_jk} should still predict the behavior of the redefined sum $J_{-k}(T)$. In Theorems \ref{mainthm} and \ref{thm_enlarged}, below, we make the first progress towards this idea, by obtaining upper bounds for $J_{-k}(T)$ over certain subfamilies of the nontrivial zeros of $\zeta(s)$ that expected to have full density inside the set of all nontrivial zeros of the zeta function.

\smallskip

Results towards Conjecture \eqref{conj_jk} are known in a few cases. Assuming the Riemann Hypothesis (RH), Gonek \cite{gonek_j1} has shown that $J_1(T) \sim \frac{1}{24 \pi}T  (\log T)^4$, while Ng \cite{ng} has shown that $J_2(T) \asymp T (\log T)^9$. Under RH and the assumption that the zeros of $\zeta(s)$ are all simple, Gonek \cite{gonek} has also shown that $J_{-1}(T) \gg T$ (see also \cite{ng_milinovich2}), which is a lower bound consistent with \eqref{conj_jk}. Furthermore, Gonek conjectured \cite{gonek} that
$$J_{-1}(T) \sim \frac{3}{\pi^3} T.$$
This agrees with the conjecture of Hughes, Keating, and O'Connell in the case $k=1$. 

\smallskip

While asymptotic formulas are known only in the cases mentioned above, progress has been made towards upper and lower bounds in certain cases. When $k<0$ (i.e., for positive discrete moments), Ng and Milinovich \cite{ng_milinovich} obtained sharp lower bounds consistent with \eqref{conj_jk} for integer $k$ under the Generalized Riemann Hypothesis (GRH) for Dirichlet $L$-functions. Milinovich \cite{mil_ub} obtained almost sharp upper bounds for positive discrete moments when $k$ is an integer under the RH, and Kirila \cite{kirila} extended this result to all $k<0$, obtaining sharp upper bounds of the order conjectured in \eqref{conj_jk}. These results settle, on GRH, conjecture \eqref{conj_jk} for negative integers $k$.

\smallskip

When $k>0$ (i.e., in the case of negative discrete moments), less is known. Heap, Li, and J.~Zhao \cite{heap_li_zhao} obtained lower bounds of the size in \eqref{conj_jk} for all fractional $k$ under RH and the assumption of simple zeros, while recently Gao and L.~Zhao \cite{gao_zhao} extended this result to all $k>0$ under the same assumptions. As mentioned above, these lower bounds are expected to be sharp for $k<3/2$, but not for $k \geq 3/2$.

\subsection{Main results} No upper bounds are known for $J_{-k}(T)$ in the case when $k>0$. The main results of this paper are to obtain upper bounds for the negative discrete moments when summing over two different subfamilies of the nontrivial zeros of $\zeta(s)$, both of which are expected to have full density inside the set of all zeros. Our first result assumes RH, while our second result assumes RH and a conjectural upper bound on $|S(t)|$, where $S(t)=\pi^{-1} \arg \zeta(1/2+it)$ and the argument is obtained by continuous variation along the straight line segments joining the points $2, 2+it$, and $1/2+it$ starting with the value $\arg\zeta(2)=0$. 

\smallskip

Let 
$$\mathcal{F} = \Big\{ \gamma \in (T,2T] : |\gamma-\gamma'| \gg \frac{1}{\log T}, \text{ for any other ordinate } \gamma'  \Big\}.$$
We expect $\mathcal{F}$ to have full density inside the set of zeros in $(T, 2T]$. Indeed,  for any fixed $\beta>0$, Montgomery's Pair Correlation Conjecture \cite{montgomery} implies that if $\gamma^{+}$ denotes the next largest ordinate of a nontrivial zero of $\zeta(s)$ above $\gamma$, then
$$ \# \Big\{  \gamma \in (T, 2T] : \, \gamma^{+} - \gamma \leq \frac{2 \pi \beta}{\log T}\Big\} \ll \beta^3 N(T),$$ where $N(T)$ denotes the number of nontrivial zeros of $\zeta(s)$ with ordinates $\gamma \in (0, T]$. Therefore, the set of excluded zeros whose ordinates do not belong to the family $\mathcal{F}$ conjecturally has measure $0$.

\smallskip

We will prove the following theorem.
\begin{thm}
\label{mainthm}
Assume the Riemann hypothesis and let $\varepsilon>0$. Then, for any $\delta>0$, we have that
\begin{equation}
\sum_{\gamma \in \mathcal{F}} |\zeta'(\rho)|^{-2k} \ll 
\begin{cases}
T^{1+\delta}, & \mbox{ if } \ 2k\,(1+\varepsilon)\leq 1, \\
T^{k+\frac{1}{2}+\delta}, & \mbox{ if } \ 2k \,(1+\varepsilon)>1.
\end{cases}
\label{to_prove}
\end{equation}
\end{thm}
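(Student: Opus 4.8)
The plan is to convert $|\zeta'(\rho)|^{-2k}$ at a zero into an average of $|\zeta|^{-2k}$ over a small disc centred at that zero, sum over the well-spaced family $\mathcal{F}$, and then appeal to (conditional) bounds for negative moments of $\zeta$ on vertical lines just off the critical line.

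\textbf{Step 1: a local representation.} Fix $\gamma\in\mathcal{F}$. By definition of $\mathcal{F}$ there is a small absolute constant $c>0$ such that the disc $\{|s-\rho|\le 2c/\log T\}$ contains no zero of $\zeta$ other than $\rho$; in particular $\rho$ is simple and $g(s):=(s-\rho)/\zeta(s)$ extends to a holomorphic, non-vanishing function on this disc with $g(\rho)=1/\zeta'(\rho)$. Fixing a branch of $g^{2k}$ and using the mean-value property on the circle of radius $r$ for $c/\log T\le r\le 2c/\log T$ gives
$$|\zeta'(\rho)|^{-2k}=|g(\rho)|^{2k}\le\frac{1}{2\pi}\int_0^{2\pi}|g(\rho+re^{i\theta})|^{2k}\,d\theta=\frac{r^{2k}}{2\pi}\int_0^{2\pi}\frac{d\theta}{|\zeta(\rho+re^{i\theta})|^{2k}}.$$
Averaging over $r\in[c/\log T,2c/\log T]$ and rewriting as an integral against planar Lebesgue measure over the annulus $A_\rho=\{c/\log T\le|s-\rho|\le 2c/\log T\}$, on which $|s-\rho|\asymp 1/\log T$, yields
$$|\zeta'(\rho)|^{-2k}\ll(\log T)^{2-2k}\iint_{A_\rho}\frac{dx\,dy}{|\zeta(x+iy)|^{2k}}.$$

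\textbf{Step 2: summation over $\mathcal{F}$.} Since any two ordinates in $\mathcal{F}$ differ by $\gg 1/\log T$, after shrinking $c$ the annuli $A_\rho$ $(\gamma\in\mathcal{F})$ are pairwise disjoint, and the region $\mathcal{S}:=\bigcup_{\gamma\in\mathcal{F}}A_\rho$ lies in the rectangle $\{\,|\re s-\tfrac12|\le 2c/\log T,\ T-1\le\im s\le 2T+1\,\}$ while staying at distance $\gg 1/\log T$ from every zero of $\zeta$. Summing the previous display over $\gamma\in\mathcal{F}$ therefore gives
$$\sum_{\gamma\in\mathcal{F}}|\zeta'(\rho)|^{-2k}\ll(\log T)^{2-2k}\iint_{\mathcal{S}}\frac{dx\,dy}{|\zeta(x+iy)|^{2k}}\ll(\log T)^{1-2k}\sup_{|u|\le 2c/\log T}\ \int_{T-1}^{2T+1}\frac{dt}{|\zeta(\tfrac12+u+it)|^{2k}},$$
where in the last step one integrates only over the portions of each vertical line lying in $\mathcal{S}$ (when $2k<1$ one may harmlessly integrate over the whole segment, as the singularities are then integrable). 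Thus everything reduces to estimating, on RH, the negative $2k$-th moment of $\zeta$ on vertical lines $\re s=\tfrac12+u$ with $|u|\ll 1/\log T$, staying $\gg 1/\log T$ away from the zeros when $2k\ge1$.

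\textbf{Step 3: negative moments of $\zeta$ near the half-line.} This is the crux, and it is where the dichotomy in \eqref{to_prove} originates: $\int_T^{2T}|\zeta(\tfrac12+it)|^{-2k}\,dt$ already diverges once $2k\ge1$ (because of the simple zeros), which is why one must move off the line and why the parameter $\varepsilon$ enters. If $2k(1+\varepsilon)\le1$, one lets $u\to 0$ and bounds the inner integral by its conjectured size $T(\log T)^{O_k(1)}$; on RH this can be obtained for $2k<1$ by approximating $1/\zeta$ by a short Dirichlet polynomial and invoking the classical mean-value theorem for Dirichlet polynomials together with H\"{o}lder's inequality, and with the prefactor $(\log T)^{1-2k}$ this gives the first bound in \eqref{to_prove}. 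If $2k(1+\varepsilon)>1$, one keeps $u\asymp 1/\log T$ and bounds $\int_{T}^{2T}|\zeta(\tfrac12+u+it)|^{-2k}\,dt$ by splitting according to the size of $|\zeta(\tfrac12+u+it)|$: the crude lower bound $|\zeta(\tfrac12+u+it)|\gg t^{-C}$ valid on RH off the line (from the explicit formula for $\zeta'/\zeta$) controls the range where $\zeta$ is exceptionally small, while the bulk is handled via the negative first-moment bound of size $T^{1+o(1)}$ from above together with the Lindel\"{o}f-on-RH estimate $|\zeta|\ll t^{o(1)}$; optimising the split produces the exponent $k+\tfrac12$, hence the second bound in \eqref{to_prove}.

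\textbf{The main obstacle.} The heart of the argument — and the reason the second bound is $T^{k+1/2+\delta}$ rather than the conjectural $T^{1+\delta}$ — is Step 3. Negative moments of $\zeta$ are genuinely harder than positive ones: there is no pointwise lower bound for $|\zeta(\tfrac12+it)|$ better than $t^{-C}$ (zeros may, a priori, cluster at the scale $1/\log t$ even under RH), so one is forced off the critical line and must control, as sharply as possible, the rare set where $|\zeta|$ is tiny — a set that, when $2k\ge1$, can carry essentially all the mass of the moment. Making the off-line negative moment bounds as strong as $T^{1+o(1)}$ in the first regime, and organising the splitting optimally in the second, is exactly where the real work lies.
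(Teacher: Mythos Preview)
Your subharmonicity reduction in Steps 1--2 is valid and is genuinely different from the paper's route. The paper does not pass to a 2D integral; instead it proves (Lemma~2.1, via a formula of Kirila coming from the Hadamard product) that for $\gamma\in\mathcal{F}$ one has $\log|\zeta'(\rho)|=\log|\zeta(\rho+\tfrac{1}{\log T})|+O(\log T/\log\log T)$, reducing everything to the \emph{discrete} shifted moment $\sum_{\gamma}|\zeta(\rho+\tfrac{1}{\log T})|^{-2k}$. It then adapts the Harper--Soundararajan machinery of Bui--Florea to these discrete averages, which requires a new discrete mean-value theorem for Dirichlet polynomials summed over zeros (their Theorem~3.1, proved via the Landau--Gonek explicit formula). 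Your plan, by contrast, aims for a \emph{continuous} shifted moment, which---if you reach it---could be quoted directly from Bui--Florea, bypassing the discrete mean-value theorem altogether. That would be a real simplification.

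The gap is in how you get from the 2D integral to a continuous shifted moment. Any circle around $\rho$ meets the critical line, so your region $\mathcal{S}$ contains points with $\re s$ arbitrarily close to $\tfrac12$, and the supremum in Step~2 runs over all $|u|\le 2c/\log T$. Your proposal for the first regime---``let $u\to 0$ and bound $\int_T^{2T}|\zeta(\tfrac12+it)|^{-2k}\,dt\ll T(\log T)^{O(1)}$ via a short Dirichlet polynomial for $1/\zeta$''---is not known under RH: that integral can diverge at multiple zeros (RH alone does not give simplicity), and even granting simple zeros no such bound is available; the Bui--Florea bounds apply only on lines $\re s=\tfrac12+\alpha$ with $\alpha\gg 1/\log T$, precisely because their key lower bound for $\log|\zeta|$ degenerates as $\alpha\to 0$. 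What is actually needed is the observation that for every $s=\sigma+it\in\mathcal{S}$ (hence at distance $\gg 1/\log T$ from all zeros), the Hadamard product gives $\log|\zeta(\sigma+it)|=\log|\zeta(\tfrac12+\tfrac{1}{\log T}+it)|+O(\log T/\log\log T)$---exactly the computation underlying the paper's Lemma~2.1. With this in hand your 2D integral collapses (up to $T^{o(1)}$) to $\int_T^{2T}|\zeta(\tfrac12+\tfrac{1}{\log T}+it)|^{-2k}\,dt$, and you may then invoke Bui--Florea for the first regime; your interpolation for the second regime is correct once the first is available. Finally, your one-line description of how the shifted negative moment itself is bounded (``short M\"obius polynomial + Montgomery--Vaughan + H\"older'') is not the mechanism: the actual argument is the iterative Harper--Soundararajan splitting over prime ranges, and that is where essentially all the work lies.
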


The proof of Theorem \ref{mainthm} relies on Littlewood's classical estimate \cite{littlewood} that
\[
 |S(t)| \ll \frac{\log t}{\log\log t},
\]
assuming RH as $t\to\infty$. It is expected that maximum size of $|S(t)|$ is much smaller than this bound. 
In \cite{fgh}, Farmer, Gonek, and Hughes conjecture that 
\begin{equation}
 |S(t)| \ll \sqrt{ \log t \log \log t}.
 \label{conj_st}
\end{equation}
Now let $f(t)$ be any function slowly going to infinity and let
\begin{equation}\label{enlarged}
\mathcal{F}^{\text{enl}} = \Big\{ \gamma \in (T,2T] : |\gamma-\gamma'| \gg \frac{1}{ \exp \Big( \frac{ \sqrt{\log T}}{ f(T) \sqrt{\log \log T}} \Big)}, \text{ for any other ordinate } \gamma'  \Big\}.
\end{equation}
Then we will prove the following theorem.
\begin{thm}
\label{thm_enlarged}
 Let $\varepsilon>0$, and assume both the Riemann hypothesis and the conjecture in \eqref{conj_st}. Then, for any $\delta>0$, we have that
\begin{equation*}
\sum_{\gamma \in \mathcal{F}^{\text{enl}}} |\zeta'(\rho)|^{-2k} \ll 
\begin{cases}
T^{1+\delta}, & \mbox{ if } \ 2k\,(1+\varepsilon)\leq 1, \\
T^{k+\frac{1}{2}+\delta}, & \mbox{ if } \ 2k \,(1+\varepsilon)>1.
\end{cases}
\end{equation*}
\end{thm}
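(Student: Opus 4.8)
The plan is to run the proof of Theorem~\ref{mainthm} essentially line by line, using the conjectural bound \eqref{conj_st} at each point where that proof appeals to Littlewood's estimate $|S(t)|\ll\log t/\log\log t$; the spacing threshold defining $\mathcal F^{\mathrm{enl}}$ in \eqref{enlarged} is precisely the size that \eqref{conj_st} can sustain. Put $G=G(T)=\exp\!\big(-\sqrt{\log T}/(f(T)\sqrt{\log\log T})\big)$ and $2k_0=1/(1+\varepsilon)$, so that $|\gamma-\gamma'|\gg G$ whenever $\gamma\in\mathcal F^{\mathrm{enl}}$ and $\gamma'\neq\gamma$. The argument rests on two estimates, valid under RH and \eqref{conj_st}: a \emph{pointwise bound} $|\zeta'(\rho)|^{-1}\ll_\delta T^{1/2+\delta}$ for $\gamma\in\mathcal F^{\mathrm{enl}}$, and a \emph{mean value bound} of the shape $\sum_{\gamma\in\mathcal F^{\mathrm{enl}}}|\zeta'(\rho)|^{-2k_0}\ll_\delta T^{1/2+k_0+\delta}$. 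Granting these, the case $2k(1+\varepsilon)\le1$ is immediate from the mean value bound, Hölder's inequality and $\#\mathcal F^{\mathrm{enl}}\ll T\log T$ (which give $\ll_\delta T^{1+\delta}$), while for $2k(1+\varepsilon)>1$ one peels off the surplus powers pointwise:
\[
\sum_{\gamma\in\mathcal F^{\mathrm{enl}}}|\zeta'(\rho)|^{-2k}\le\Big(\sup_{\gamma\in\mathcal F^{\mathrm{enl}}}|\zeta'(\rho)|^{-1}\Big)^{2k-2k_0}\sum_{\gamma\in\mathcal F^{\mathrm{enl}}}|\zeta'(\rho)|^{-2k_0}\ll_\delta T^{(\frac12+\delta)(2k-2k_0)}\,T^{\frac12+k_0+\delta}\ll_\delta T^{k+\frac12+\delta}.
\]

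For the pointwise bound: since $\gamma\in\mathcal F^{\mathrm{enl}}$ forces $\rho$ to be the only zero in $|s-\rho|<cG$, on this disc $\zeta(s)=(s-\rho)\zeta'(\rho)e^{g_\rho(s)}$ with $g_\rho(\rho)=0$ (equivalently $1/\zeta'(\rho)=\frac1{2\pi i}\oint_{|s-\rho|=r}ds/\zeta(s)$, $r\asymp G$). Evaluating at $s_0=\tfrac12+\eta+i\gamma$ with $\eta\asymp G$ gives $|\zeta'(\rho)|^{-1}=\eta\,e^{\re g_\rho(s_0)}/|\zeta(s_0)|$, so it remains to bound two quantities. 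Integrating $(\zeta'/\zeta)(s)-\tfrac1{s-\rho}=\sum_{\gamma'\neq\gamma}\tfrac1{s-\rho'}+O(\log t)$ along $[\rho,s_0]$ gives $\re g_\rho(s_0)=\tfrac12\sum_{0<|\gamma-\gamma'|\le1}\log\!\big(1+\eta^2/(\gamma-\gamma')^2\big)+O(\eta\log T)$, which by the spacing condition and the zero-count estimate $\#\{\gamma':|\gamma-\gamma'|\le h\}\ll h\log T+\|S\|_\infty$ (where \eqref{conj_st} enters) is $\ll\sqrt{\log T\log\log T}+G\log T=o(\log T)$; hence $e^{\re g_\rho(s_0)}\ll_\delta T^\delta$. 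And from the identity
\[
\log|\zeta(\sigma+it)|=O(1)+\int_{\mathbb{R}}(t-y)\Big(\tfrac1{(\sigma-1/2)^2+(t-y)^2}-\tfrac1{(3/2)^2+(t-y)^2}\Big)S(y)\,dy+O(1/t)
\]
(obtained by integrating $\re(\zeta'/\zeta)(\cdot+it)$ in $\sigma$ and inserting the Riemann--von Mangoldt formula) one gets $|\zeta(s_0)|^{-1}\ll\exp\!\big(C\|S\|_\infty\log(1/\eta)\big)$; with $\log(1/\eta)\asymp\sqrt{\log T}/(f(T)\sqrt{\log\log T})$, \eqref{conj_st} makes $\|S\|_\infty\log(1/\eta)\ll\log T/f(T)=o(\log T)$, so $|\zeta(s_0)|^{-1}\ll_\delta T^\delta$. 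Multiplying yields $|\zeta'(\rho)|^{-1}\ll_\delta T^{1/2+\delta}$ (with room to spare). If one argues via the contour integral instead, the portion of the circle with $\re s<\tfrac12$ is reflected into $\re s>\tfrac12$ by the functional equation. This is exactly the pointwise step of Theorem~\ref{mainthm}, with \eqref{conj_st} in place of Littlewood's bound and the weaker spacing $\gg G$ permitted.

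The main obstacle is the mean value bound. I would derive it from the same contour/factorization identity via a negative-moment estimate for $\zeta$ just off the critical line: averaging the circle integral over $r\asymp G$ and using the bounded overlap afforded by the spacing condition converts $\sum_{\gamma\in\mathcal F^{\mathrm{enl}}}|\zeta'(\rho)|^{-2k_0}$ into an integral of $|\zeta|^{-2k_0(1+o(1))}$ over a thin strip about $\sigma=\tfrac12$; the surplus $1+\varepsilon$ in $2k_0$ supplies the room needed for the ensuing Hölder step and keeps the local integrals near zeros convergent. The delicate point is that this integral sees \emph{all} zeros, including anomalously close pairs lying outside $\mathcal F^{\mathrm{enl}}$; near those one again invokes pointwise control of $\zeta,\zeta',\zeta''$ to bound their contribution, and away from zeros one uses the usual mean-square/mollified mean-value estimates for $1/\zeta$ off the line. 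Making this run uniformly, with the sharp exponent $\tfrac12+k_0$, is the heart of the argument, and it is here—exactly as in the pointwise step—that the conjectural input \eqref{conj_st} is indispensable.
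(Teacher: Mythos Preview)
Your first sentence is exactly right, and it is essentially the entirety of the paper's proof: Theorem~\ref{thm_enlarged} follows by rerunning the proof of Theorem~\ref{mainthm} verbatim, replacing Lemma~\ref{prelim} by Lemma~\ref{enlarged_family} (the only place where the bound on $S(t)$ is used). What you go on to describe, however, is \emph{not} the paper's proof of Theorem~\ref{mainthm}. The paper never isolates a pointwise bound on $|\zeta'(\rho)|^{-1}$ and a separate mean value bound and then interpolates between them. Instead, Lemma~\ref{prelim} (respectively Lemma~\ref{enlarged_family}) reduces the problem in one stroke to bounding the discrete shifted moment $\sum_{T<\gamma\le 2T}|\zeta(\rho+1/\log T)|^{-2k}$, and this is attacked directly by the Harper--Soundararajan method: Lemma~\ref{lemma_lb} bounds $-\log|\zeta|$ off the line by Dirichlet polynomials $P_{u,v}$, the sum is split according to the size of these polynomials (Lemma~\ref{lem_initial}), and the resulting moments of Dirichlet polynomials over zeros are evaluated by Propositions~\ref{sum0}--\ref{K_lemma}, which rest on the discrete mean-value theorem (Theorem~\ref{mvt}) proved from the Landau--Gonek explicit formula. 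The crude pointwise bound \eqref{pointwise} enters only on the small exceptional set $\gamma\notin\mathcal T_0$.

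Your alternative scheme has a genuine gap at the mean value step. Passing from $\sum_{\gamma\in\mathcal F^{\mathrm{enl}}}|\zeta'(\rho)|^{-2k_0}$ to a strip integral of $|\zeta|^{-2k_0}$ is circular: near any simple zero $\rho$ one has $|\zeta(s)|^{-2k_0}\asymp|\zeta'(\rho)|^{-2k_0}|s-\rho|^{-2k_0}$, so the local contribution is $\int_{|s-\rho|<cG}|\zeta|^{-2k_0}\,dA\asymp|\zeta'(\rho)|^{-2k_0}G^{2-2k_0}$, and summing over \emph{all} zeros with $T<\gamma\le 2T$ (which the strip integral necessarily does) produces $G^{2-2k_0}J_{-k_0}(T)$ --- the full, unrestricted negative moment --- as a \emph{lower} bound for the strip integral. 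You cannot ``invoke pointwise control of $\zeta,\zeta',\zeta''$'' near zeros outside $\mathcal F^{\mathrm{enl}}$: the absence of such control is precisely why one restricts to $\mathcal F^{\mathrm{enl}}$ in the first place. Nor do existing negative-moment bounds for $\zeta$ off the line help, since those in \cite{bf_negative} require a shift $\gg 1/\log T$, whereas your strip lives at distance $\asymp G\ll 1/\log T$. Finally, even granting an optimistic bound $T^{1+\delta}$ for your strip integral, combining it with the pointwise bound yields only $T^{k+1-k_0+\delta}$ for $k>k_0$, which misses the target $T^{k+1/2+\delta}$ because $k_0<1/2$; your claimed sharper bound $T^{1/2+k_0+\delta}$ is unsupported. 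The paper sidesteps all of this by working at shift $1/\log T$ throughout and never converting to a continuous moment.
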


{\sc Remark.} Note that if we instead assume a conjectural bound for $S(t)$ of the form
$$|S(t)| \ll h(t)$$ for some function $h(t)$ such that $h(t) =o( \frac{\log t}{\log \log t})$, then we can define a family of zeros
$$\mathcal{F}^{'} = \Big\{ \gamma \in (T,2T] : |\gamma-\gamma'| \gg \frac{1}{ \exp \Big( \frac{ \log T}{ f(T) h(T)} \Big)}, \text{ for any other ordinate } \gamma'  \Big\},$$ where, as before, $f(t)$ is any function slowly going to infinity. Then, modifying our proof of Theorems \ref{mainthm} and \ref{thm_enlarged} in a straightforward manner, we can similarly prove that \eqref{to_prove} holds when summing over the zeros with $\gamma \in \mathcal{F}'$.
 
 \smallskip

\subsection{Bounds for $J_{-k}(T)$ assuming the Weak Mertens Conjecture}

If one were to sum over the full family of zeros of $\zeta(s)$, the problem of obtaining upper bounds for the negative discrete moments becomes more delicate. In order to obtain an upper bound, it seems that simply assuming RH and the simplicity of zeros is not enough. Indeed, from equation \eqref{tobound} in the proof of Lemma $2.1$, one sees that $\log|\zeta'(\rho)|$ is closely connected to a sum over the zeros $\rho'\neq \rho$, and in order to control $\log |\zeta'(\rho)|$ we need to understand the small differences $\gamma'-\gamma$. Hence, one would need to assume a lower bound on the size of the smallest difference between consecutive zeros of $\zeta(s)$. 

\smallskip

It is possible to obtain upper bounds on the negative discrete moments $J_{-k}(T)$ when summing over the full family of zeros of $\zeta(s)$ assuming a well-known conjecture for the distribution of partial sums of the M\"{o}bius function. Defining $M(x) =\sum_{n\leq x} \mu(n)$,  the Weak Mertens Conjecture (WMC) states that 
\[ 
\int_1^X \Big(  \frac{M(x)}{x} \Big)^2 \, dx \ll \log X.
\]
Titchmarsh \cite[Chapter~14]{titchmarsh} has shown that the WMC implies RH, that the zeros of $\zeta(s)$ are all simple, the fact that sum 
\begin{equation}
\label{conv}
\sum_{\rho} \frac{1}{|\rho \, \zeta'(\rho)|^2}
\end{equation} 
over all nontrivial zeros of $\zeta(s)$ is convergent. 
From these facts, we now observe that WMC implies that
\[
J_{-k}(T)  =
\begin{cases}
o\!\left( T^{k+1} (\log T)^{1-k} \right), & \mbox{ if } \ 0<k<1, \\
o\!\left( T^{2k}\right), & \mbox{ if } \ k\ge1.
\end{cases}
\]
Though these estimates apply to sums over all zeros, we note that both of these bounds are weaker than the bounds obtained in Theorem \ref{mainthm} for zeros restricted to the set $\mathcal{F}$.

\smallskip

If $k \geq 1$, then using Cauchy's inequality and WMC, we have
\begin{equation} \label{bd1}
\begin{split}
J_{-k}(T) = \sum_{T < \gamma  \le 2T} \frac{1}{| \zeta'(\rho)|^{2k}} & \leq \Big( \sum_{\gamma \in (T,2T]} \frac{1}{|\rho \, \zeta'(\rho)|^{2k}} \Big)^{1/2} \Big( \sum_{\gamma \in (T, 2T]} \frac{|\rho|^{2k}}{|\zeta'(\rho)|^{2k}} \Big)^{1/2} \\
& =o\Big(T^k J_{-k}(T)^{1/2}\Big),
\end{split}
\end{equation}
as $T\to\infty$, where we used the fact that the first sum on the right-hand side of the inequality in the first line is $o(1)$, which follows from \eqref{conv} and the fact that $k \geq 1$. This implies that $J_{-k}(T) = o\big(T^{2k}\big)$ for $k \geq 1$. On the other hand, if $k \in (0,1)$, then using H\"{o}lder's inequality, we have that 
\[
\begin{split}
J_{-k}(T) =\sum_{T < \gamma  \le 2T} \frac{1}{| \zeta'(\rho)|^{2k}} &\leq \Big( \sum_{\gamma \in (T,2T]} \frac{1}{|\zeta'(\rho)|^{2}} \Big)^{k}  \Big(  \sum_{\gamma \in (T, 2T]} 1 \Big)^{1-k} 
\\
&= O\Big( J_{-1}(T)^k (T \log T)^{1-k} \Big) = o\Big( T^{k+1} (\log T)^{1-k} \Big),
\end{split}
\]
as $T \to\infty$, which follows by using the bound $J_{-1}(T) = o(T^2)$ derived in \eqref{bd1} and the fact that $N(2T) - N(T) \ll T \log T$. 

\subsection{Overview of the paper}
To prove Theorems \ref{mainthm} and \ref{thm_enlarged}, we start by relating $\log |\zeta'(\rho)|$ to $\log|\zeta(\rho+1/\log T)|$ and a sum over the zeros $\rho' \neq \rho$, which we then bound using the lower bound on the difference between consecutive zeros provided by the families $\mathcal{F}$ and $\mathcal{F}^{\text{enl}}$. The problem of bounding the discrete negative moments of the derivative of $\zeta(s)$ then reduces to bounding negative discrete shifted moments of $\zeta(s)$. To do that, we build on work of the first two authors \cite{bf_negative} on upper bounds for the negative continuous moments of $\zeta(s)$, which in turn builds on work of Harper \cite{harper} and Soundararajan \cite{sound_ub} on upper bounds for the positive moments of $\zeta(s)$ on the critical line. Our analysis uses a discrete mean-value theorem for the mean-square of Dirichlet polynomials averaged over zeros of $\zeta(s)$, similar in flavor to the Montgomery-Vaughan's mean-value theorem for continuous averages Dirichlet polynomials \cite[Corollary 3]{MV}. The proof of this discrete mean-value estimate relies on the Landau-Gonek explicit formula \cite{gonek_landau}.

\smallskip

In Section \ref{first_steps}, we relate $\log |\zeta'(\rho)|$ to $\log |\zeta(\rho+1/\log T)|$ building upon previous observations of Kirila \cite{kirila}. Then we gather a few facts that we will need from \cite{bf_negative} and state the main propositions which evaluate moments of certain Dirichlet polynomials. In Section \ref{mvt}, we prove our discrete mean-value theorem for Dirichlet polynomials averaged over zeros of $\zeta(s)$, and in Section \ref{section_proofs_prop} we prove the main propositions stated in Section \ref{first_steps}. We finally prove Theorems \ref{mainthm} and \ref{thm_enlarged} in Section \ref{proofs}.

\section{First steps}
\label{first_steps}
We first express $\log |\zeta'(\rho)|$ in terms of $\log |\zeta(\rho+1/\log T)|$.
\begin{lem}
\label{prelim}
Assume RH. For $\gamma \in \mathcal{F}$, we have that 
\begin{equation*}
\log \frac{1}{|\zeta'(\rho)|} = \log \frac{1}{|\zeta(\rho+\tfrac{1}{\log T})|} + O\Big(  \frac{\log T}{\log \log T} \Big).
\end{equation*}
\end{lem}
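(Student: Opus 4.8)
The plan is to compare $\zeta'(\rho)$ with the value of $\zeta$ just to the right of $\rho$, an idea I take from Kirila \cite{kirila}; the new point is that the spacing built into $\mathcal{F}$ lets one carry this out with an acceptable error term. Put $a=1/\log T$ and introduce $F(s)=\zeta(s)/(s-\rho)$. Assuming, as we may, that $\rho$ is a simple zero, $F$ continues holomorphically across $\rho$ with $F(\rho)=\zeta'(\rho)$, and under RH it is nowhere vanishing on the horizontal segment from $\rho$ to $\rho+a$. Since $\zeta(\rho+a)=a\,F(\rho+a)$, we get
\[
\log\frac{1}{|\zeta'(\rho)|}-\log\frac{1}{|\zeta(\rho+a)|}=\bigl(\log|F(\rho+a)|-\log|F(\rho)|\bigr)-\log\log T,
\]
and since $\log\log T=o(\log T/\log\log T)$ it is enough to show $\log|F(\rho+a)|-\log|F(\rho)|\ll\log T/\log\log T$.

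First I would rewrite this difference as a logarithmic integral: using $\log|F|=\re\log F$ along the segment,
\[
\log|F(\rho+a)|-\log|F(\rho)|=\re\int_0^a\Bigl(\frac{\zeta'}{\zeta}(\rho+u)-\frac1u\Bigr)\,du,
\]
where the integrand is holomorphic at $u=0$ because the simple pole of $\zeta'/\zeta$ at $\rho$ is cancelled by $1/u$. Then I would insert the classical partial-fraction estimate $\frac{\zeta'}{\zeta}(s)=\sum_{|\gamma'-\im s|\le1}(s-\rho')^{-1}+O\bigl(\log(|\im s|+2)\bigr)$, valid for $s$ in the critical strip, peel off the term $\rho'=\rho$ (which is exactly $1/u$), and integrate term by term over the $\ll\log T$ remaining zeros with $|\gamma'-\gamma|\le1$. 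Using $\re\int_0^a\frac{du}{u+i(\gamma-\gamma')}=\tfrac12\log\bigl(1+a^2/(\gamma-\gamma')^2\bigr)$ and $a\log T=1$, this reduces the problem to bounding
\[
\sum_{\substack{|\gamma-\gamma'|\le1\\ \gamma'\neq\gamma}}\frac12\log\Bigl(1+\frac{a^2}{(\gamma-\gamma')^2}\Bigr).
\]

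It is here that the definition of $\mathcal{F}$ enters. For $\gamma\in\mathcal{F}$ we have $|\gamma-\gamma'|\gg a$ for every $\gamma'\neq\gamma$, so each summand is $O(1)$; the zeros with $|\gamma-\gamma'|\le a$ therefore contribute at most $O(1)$ times their number, and that number is $\ll\log T/\log\log T$ by the Riemann--von Mangoldt formula $N(t+h)-N(t)=\tfrac{h}{2\pi}\log\tfrac{t}{2\pi}+S(t+h)-S(t)+O(h)$ combined with Littlewood's bound $S(t)\ll\log t/\log\log t$ on RH. For zeros with $a<|\gamma-\gamma'|\le1$ I would decompose dyadically: in the range $2^{j}a<|\gamma-\gamma'|\le2^{j+1}a$ (with $0\le j\ll\log\log T$) the same two ingredients give $\ll2^{j}+\log T/\log\log T$ zeros, each contributing $\ll\log(1+2^{-2j})\ll2^{-2j}$, so the total over $j$ is $\ll\sum_{j\ge0}\bigl(2^{j}+\tfrac{\log T}{\log\log T}\bigr)2^{-2j}\ll\log T/\log\log T$. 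Adding the two ranges gives the lemma.

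The only real difficulty is this last step: the hypothesis $\gamma\in\mathcal{F}$ controls only the \emph{nearest} zero to $\gamma$, so ruling out a dense cluster of zeros slightly further away still requires RH, and it is precisely Littlewood's bound for $S(t)$ that caps the local zero-count and hence produces the error term $O(\log T/\log\log T)$ --- consistent with the fact, exploited in Theorem \ref{thm_enlarged}, that a stronger bound on $|S(t)|$ allows a smaller separation in the definition of the family.
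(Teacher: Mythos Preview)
Your proof is correct and follows essentially the same route as the paper's: both reduce to bounding $\sum_{\rho'\neq\rho}\log\bigl(1+a^2/(\gamma-\gamma')^2\bigr)$ and then use the membership $\gamma\in\mathcal{F}$ to make each nearby term $O(1)$ together with Littlewood's RH bound on $S(t)$ to control the local zero-count. The only cosmetic differences are that the paper quotes Kirila's identity \eqref{tobound} directly (a Hadamard-product formula over all zeros) rather than deriving the local version by integrating the partial-fraction approximation for $\zeta'/\zeta$, and the paper dispatches the range $|\gamma-\gamma'|\ge a$ with the phrase ``standard estimates'' where you spell out the dyadic decomposition.
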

\begin{proof}
Equation $3.3$ in Kirila \cite{kirila} states that 
\begin{align}
\log |\zeta'(\rho)| &= \log |\zeta(\sigma+i\gamma)| +\Big(\sigma-\frac{1}{2}\Big)\Big(\frac{\log T}{2}+O(1)\Big)-\log \Big|\sigma- \frac{1}{2}\Big| \nonumber \\
&\qquad-\frac{1}{2} \sum_{\rho' \neq \rho} \log \frac{ (\sigma-\frac{1}{2})^2+ (\gamma-\gamma')^2}{(\gamma-\gamma')^2},\label{tobound}
\end{align}
for any $\sigma>1/2$. We will obtain an upper bound for the sum over $\rho'$. Let $\sigma-1/2=\alpha$.  
We write
\begin{align*}
\sum_{\rho' \neq \rho} \log \frac{ \alpha^2+ (\gamma-\gamma')^2}{(\gamma-\gamma')^2} = \sum_{|\gamma'-\gamma| <\alpha} \log \frac{ \alpha^2+ (\gamma-\gamma')^2}{(\gamma-\gamma')^2}  + \sum_{|\gamma'-\gamma| \geq \alpha} \log \frac{ \alpha^2+ (\gamma-\gamma')^2}{(\gamma-\gamma')^2} .
\end{align*}
Let $M_1(\gamma)$ denote the first term on the right-hand side, and let  $M_2(\gamma)$ denote the second.

We pick $$\alpha = \frac{1}{\log T}.$$
We first bound $M_1(\gamma)$. We have 
\begin{equation}
M_1(\gamma)  \ll \sum_{|\gamma-\gamma'|<\frac{1}{\log T}} 1 \ll \frac{ \log T}{\log \log T},
\label{s1_bound}
\end{equation}
where we used the fact that $\gamma \in \mathcal{F}$, and that 
$$N\Big(\gamma+\frac{1}{\log T} \Big)  - N\Big( \gamma - \frac{1}{\log T} \Big) \ll 1+ \Big|S\Big(T+\frac{1}{\log T}\Big)- S \Big(T - \frac{1}{\log T}\Big)\Big| \ll \frac{\log T}{\log \log T}.$$
Standard estimates imply that
\begin{equation}
 M_2(\gamma) = \sum_{|\gamma-\gamma'| \geq \frac{1}{\log T}} \log \Big(1+ \frac{\alpha^2}{(\gamma-\gamma')^2} \Big)\ll \frac{\log T}{\log \log T}.
 \label{s2_bound}
 \end{equation} 
 Hence for $\gamma \in \mathcal{F}$, using \eqref{s1_bound} and \eqref{s2_bound}, we have that 
\begin{equation*}
\log \frac{1}{|\zeta'(\rho)|} = \log \frac{1}{|\zeta(\rho+1/\log T)|} + O\Big(  \frac{\log T}{\log \log T} \Big).
\end{equation*}
This completes the proof of the lemma.
\end{proof}

We similarly obtain the following estimate.
\begin{lem}
\label{enlarged_family}
For $\gamma \in \mathcal{F}^{\text{enl}}$, we have that
\begin{equation*}
\log \frac{1}{|\zeta'(\rho)|} = \log \frac{1}{|\zeta(\rho+\tfrac{1}{\log T})|} + O\Big(  \frac{\log T}{f(T)} \Big),
\end{equation*}
where $f$ is the function appearing in \eqref{enlarged}. 
\end{lem}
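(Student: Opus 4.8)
The plan is to mimic the proof of Lemma~\ref{prelim} step for step, the only change being the way we bound the sum over zeros: we replace the use of $\gamma\in\mathcal{F}$ and Littlewood's bound on $S(t)$ by the wider spacing built into $\mathcal{F}^{\text{enl}}$ together with the conjectural bound \eqref{conj_st}. As before, we apply Kirila's identity \eqref{tobound} with $\sigma=\tfrac12+\alpha$, $\alpha=1/\log T$, so that on RH $\rho+1/\log T=\sigma+i\gamma$. With this choice $(\sigma-\tfrac12)(\tfrac{\log T}{2}+O(1))=O(1)$ and $-\log|\sigma-\tfrac12|=\log\log T$; since $f$ goes to infinity slowly we have $\log\log T=o(\log T/f(T))$, so both terms are absorbed into the allowed error. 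It therefore remains to show that
\[
\sum_{\rho'\neq\rho}\log\frac{\alpha^2+(\gamma-\gamma')^2}{(\gamma-\gamma')^2}\ll\frac{\log T}{f(T)}
\]
for $\gamma\in\mathcal{F}^{\text{enl}}$. Splitting this sum as $M_1(\gamma)+M_2(\gamma)$ exactly as in Lemma~\ref{prelim}, according to whether $|\gamma-\gamma'|<\alpha$ or $|\gamma-\gamma'|\geq\alpha$, the second piece $M_2(\gamma)\ll\log T/\log\log T$ is handled by the identical ``standard estimate'' giving \eqref{s2_bound}, and this is $\ll\log T/f(T)$.

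For the first piece I would use two inputs. First, under \eqref{conj_st},
\[
\#\Big\{\gamma':|\gamma-\gamma'|<\tfrac1{\log T}\Big\}\ll 1+\Big|S\big(T+\tfrac1{\log T}\big)-S\big(T-\tfrac1{\log T}\big)\Big|\ll\sqrt{\log T\,\log\log T}.
\]
Second, by the defining property \eqref{enlarged} of $\mathcal{F}^{\text{enl}}$, every $\gamma'\neq\gamma$ obeys $|\gamma-\gamma'|\gg\exp\!\big(-\sqrt{\log T}/(f(T)\sqrt{\log\log T})\big)$, whence each summand satisfies
\[
\log\frac{\alpha^2+(\gamma-\gamma')^2}{(\gamma-\gamma')^2}=\log\!\Big(1+\frac{\alpha^2}{(\gamma-\gamma')^2}\Big)\ll\frac{\sqrt{\log T}}{f(T)\sqrt{\log\log T}},
\]
the contribution of $\log\alpha^2=-2\log\log T$ being of smaller order. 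Multiplying the number of terms by the size of each term gives
\[
M_1(\gamma)\ll\sqrt{\log T\,\log\log T}\cdot\frac{\sqrt{\log T}}{f(T)\sqrt{\log\log T}}=\frac{\log T}{f(T)},
\]
and combining this with the bound for $M_2(\gamma)$ and feeding back into \eqref{tobound} yields the lemma.

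The computation is routine; the one point that requires care — and the reason \eqref{conj_st} rather than merely RH is invoked here — is the exponent bookkeeping in the last display. The spacing in $\mathcal{F}^{\text{enl}}$ is deliberately chosen so that two consecutive zeros may be as close as $\exp(-\sqrt{\log T}/(f(T)\sqrt{\log\log T}))$, which makes each summand in $M_1(\gamma)$ as large as $\sqrt{\log T}/(f(T)\sqrt{\log\log T})$; this is affordable precisely because the conjectural square-root cancellation in $S(t)$ caps the number of zeros within $1/\log T$ of $\gamma$ at $\sqrt{\log T\,\log\log T}$, and the product of these two quantities is exactly the target error $\log T/f(T)$. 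One should also note that ``$f$ slowly going to infinity'' is used only in the weak form $f(T)\log\log T=o(\log T)$, needed to absorb the $\log\log T$ terms and the $M_2$ bound into $O(\log T/f(T))$; I expect no genuine obstacle beyond this balancing of orders of magnitude.
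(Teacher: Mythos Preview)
Your proof is correct and takes essentially the same approach as the paper, which simply records $M_1(\gamma)\ll\log T/f(T)$ and $M_2(\gamma)\ll\sqrt{\log T\log\log T}$ and refers back to the proof of Lemma~\ref{prelim}. One small discrepancy: for $M_2$ you quote the RH-only estimate \eqref{s2_bound} giving $\log T/\log\log T$, whereas the paper applies the conjectural bound \eqref{conj_st} a second time to obtain the sharper $\sqrt{\log T\log\log T}$; your version then needs $f(T)\ll\log\log T$ (a bit stronger than the condition $f(T)\log\log T=o(\log T)$ you actually state) to absorb $M_2$ into $O(\log T/f(T))$, but this is harmless for any ``$f$ slowly going to infinity''.
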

\begin{proof}
Upon noticing that 
$$M_1(\gamma) \ll \frac{\log T}{f(T)} \quad \text{ and } \quad M_2(\gamma) \ll \sqrt{ \log T \log \log T},$$ 
the proof is very similar to the proof of Lemma \ref{prelim}.
\end{proof}
Throughout the paper, we rely on the following lemma proved in \cite[Lemma 2.1 and (3.2)]{bf_negative}.
\begin{lem}
\label{lemma_lb}
Assume RH. Then we have that 
\begin{align*} &\log | \zeta(\tfrac12+\tfrac{1}{\log T}+i \gamma) |\geq \frac{ \log  \frac{ \gamma }{2 \pi } }{2 \pi \Delta} \log \big(1- e^{-2 \pi \Delta/\log T} \big)-\Re\Big( \sum_{p \leq e^{2 \pi \Delta}} \frac{b(p;\Delta)}{p^{1/2+1/\log T+i \gamma}}\Big)\\
	&\qquad\qquad - \frac{\log \log \log T}{2} - \Big( \log \frac{\log T}{\Delta} \Big)^{\eta(\Delta)} + O \Big(  \frac{\Delta^2 e^{\pi \Delta}}{1+\Delta T} + \frac{\Delta \log (1+ \Delta \sqrt{T})}{\sqrt{T}}+1\Big),
\end{align*}
where the coefficients $b(p;\Delta)$ can be written down explicitly, and 
$$\eta(\Delta)=\begin{cases}1 & \text{if }\Delta = o(\log T),\\ 0& \text{if }\Delta  \gg \log T.\end{cases}.$$
Moreover, the coefficients $b(p;\Delta)$ satisfy the bound
\begin{equation}
\label{improved_bd}
|b(p;\Delta)| \leq b(\Delta) \Big( \log \frac{\log T}{\Delta} \Big)^{\eta(\Delta)},
\end{equation} 
where, for any $\varepsilon>0$,
\begin{equation}
\label{bn}
b(\Delta) = 
\begin{cases}
\frac12+\varepsilon, & \mbox{ if } \Delta =o(\log T),\\
 \frac{1}{1-e^{-2\pi \Delta/\log T}}, & \mbox{ if } \Delta  \gg \log T.
\end{cases}
\end{equation}
\end{lem}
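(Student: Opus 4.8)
Since this lemma is quoted verbatim from \cite[Lemma 2.1 and (3.2)]{bf_negative}, the plan is to reproduce the argument given there, which adapts Soundararajan's method \cite{sound_ub} for controlling $\log|\zeta|$ near the critical line (with the refinements of Harper \cite{harper}) so as to produce a \emph{lower} bound for $\log|\zeta|$, the direction needed for negative moments. First I would start from $\log\zeta(\sigma_0+i\gamma)=-\int_{\sigma_0}^{\infty}\frac{\zeta'}{\zeta}(u+i\gamma)\,du$ with $\sigma_0=\tfrac12+\tfrac{1}{\log T}$, insert the Mellin representation of a smoothed truncation of $-\frac{\zeta'}{\zeta}$ at level $x=e^{2\pi\Delta}$, and shift the contour to the left past the pole at $s=1$ and past the nontrivial zeros. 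Taking real parts and assuming RH, this yields an identity of the shape
\[
\log|\zeta(\sigma_0+i\gamma)|=\Re\Big(\sum_{n\le x}\frac{c_n(\Delta)}{n^{\sigma_0+i\gamma}}\Big)-R(\gamma;\Delta)+O\Big(\tfrac{\Delta^2e^{\pi\Delta}}{1+\Delta T}+\tfrac{\Delta\log(1+\Delta\sqrt T)}{\sqrt T}+1\Big),
\]
where the $c_n(\Delta)$ are smoothed von Mangoldt coefficients supported on $n\le x$, the displayed $O$-term comes from the far tail of the contour shift together with the archimedean ($\Gamma$-factor) contribution, and $R(\gamma;\Delta)$ collects the residues at the zeros, each of the form $\Re\big(\tfrac{1}{\log x}\int_{\sigma_0}^{\infty}x^{\rho-u-i\gamma}(\rho-u-i\gamma)^{-2}\,du\big)$.

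Next I would bound $R(\gamma;\Delta)$ from above, since an upper bound here is exactly what gives a lower bound for $\log|\zeta|$. Writing $\rho=\tfrac12+i\gamma'$ under RH and substituting $v=u-\tfrac12$, one gets $|R(\gamma;\Delta)|\ll\tfrac{1}{\log x}\int_{1/\log T}^{\infty}x^{-v}\sum_{\gamma'}\frac{dv}{(\gamma-\gamma')^2+v^2}$, and the inner sum is $\ll v^{-1}\log\gamma$ by the standard density estimate $\sum_{\gamma'}\frac{v}{(\gamma-\gamma')^2+v^2}\ll\log\gamma$. A careful evaluation of the resulting exponential-integral-type quantity $\tfrac{\log\gamma}{\log x}\int_{1/\log T}^{\infty}x^{-v}\frac{dv}{v}$, retaining the leading constant, then produces exactly the term $\tfrac{\log(\gamma/2\pi)}{2\pi\Delta}\log(1-e^{-2\pi\Delta/\log T})$ of the statement.

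Then I would pass from the smoothed von Mangoldt sum to a pure prime sum with coefficients $b(p;\Delta)$. The prime-square terms contribute $\ll\sum_{p\le\sqrt x}p^{-1-2/\log T}\ll\log\log\log T$ in the relevant ranges, which is the source of the $-\tfrac12\log\log\log T$ term, and absorbing the smooth weight on the primes, the higher prime powers, and a slight displacement of the line of summation into $b(p;\Delta)$ produces the $-(\log\tfrac{\log T}{\Delta})^{\eta(\Delta)}$ term; the explicit estimates \eqref{improved_bd}--\eqref{bn} would then follow from a direct computation of $b(p;\Delta)$, with the exponent $\eta(\Delta)$ and the constant $b(\Delta)$ recording the size of the weight and how close the line of summation sits to $\Re s=\tfrac12$.

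The hard part is uniformity in $\Delta$ across the threshold $\Delta\asymp\log T$. When $\Delta=o(\log T)$ the quantity $2\pi\Delta/\log T$ is small, so $\log(1-e^{-2\pi\Delta/\log T})\sim-\log\tfrac{\log T}{\Delta}$ and the coefficient bound degrades by a factor $\log\tfrac{\log T}{\Delta}$ (hence $\eta(\Delta)=1$ and $b(\Delta)=\tfrac12+\varepsilon$), whereas when $\Delta\gg\log T$ that quantity is bounded away from $0$ and both the kernel and the coefficients stabilize (hence $\eta(\Delta)=0$ and $b(\Delta)=(1-e^{-2\pi\Delta/\log T})^{-1}$). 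Matching these two regimes — and, in the range $\Delta=o(\log T)$, extracting the clean constant $\tfrac12+\varepsilon$ in \eqref{bn} rather than a bare $O(1)$ — is where the estimates must be done most carefully, and the various $O$-terms in the statement are precisely what is needed to make the identity of the first step hold uniformly over the full range of $\Delta$ and of $\gamma\in(T,2T]$.
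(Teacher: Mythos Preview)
The paper does not prove this lemma at all: it merely states the result and cites \cite[Lemma~2.1 and (3.2)]{bf_negative} for the proof. Your proposal correctly recognizes this and attempts to reconstruct the argument of that reference, and the outline you give --- an integral representation of $\log\zeta$ via $-\zeta'/\zeta$, a smoothed truncation at $x=e^{2\pi\Delta}$, a contour shift producing a prime sum, a zero sum, and the stated $O$-terms, followed by an \emph{upper} bound on the zero contribution (the key reversal needed to turn Soundararajan's method into a lower bound for $\log|\zeta|$), and finally the passage to a pure prime sum with the coefficient analysis giving \eqref{improved_bd}--\eqref{bn} --- is indeed the shape of the argument in \cite{bf_negative}. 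So your approach matches the cited source, which is all the paper itself invokes.

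One small caution: your description of how the leading term $\tfrac{\log(\gamma/2\pi)}{2\pi\Delta}\log(1-e^{-2\pi\Delta/\log T})$ arises from ``a careful evaluation of the resulting exponential-integral-type quantity'' is a bit loose. In \cite{bf_negative} this term comes not from an asymptotic of an exponential integral but from summing the explicit per-zero contribution (using $N(t)\sim\tfrac{t}{2\pi}\log\tfrac{t}{2\pi}$) after the kernel has been chosen so that each zero contributes a geometric-type factor; the $\log(1-e^{-2\pi\Delta/\log T})$ is exact for that kernel rather than an approximation. This does not affect the correctness of your plan, but if you actually carry out the computation you should expect the constant to fall out of the kernel choice directly rather than from estimating an integral.
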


Now we introduce some of the notation in \cite{bf_negative}. Let 
\[
I_0 = (1, T^{\beta_0}],\ I_1 = (T^{\beta_0}, T^{\beta_1}],\ \ldots,\ I_K = (T^{\beta_{K-1}}, T^{\beta_K}]
\] 
for a sequence $\beta_0 , \ldots, \beta_K$ to be chosen later such that $\beta_{j+1}=r\beta_j$, for some $r>1$. Also, let $\ell_j$ be even parameters which we will also choose later on. Let $s_j$ be integers. For now, we can think of $s_j \beta_j \asymp 1$, and $\sum_{h=0}^K \ell_h \beta_h \ll 1$. We let $T^{\beta_j} = e^{2 \pi \Delta_j}$ for every $0\leq j \leq K$ and let 
\[
P_{u,v}(\gamma)=\sum_{p\in I_u} \frac{b(p;\Delta_v)}{p^{1/2+1/\log T+i\gamma}},
\] 
and we extend $b(p;\Delta)$ to a completely multiplicative function in the first variable.
%
Let 
$$ \mathcal{T}_u = \Big\{ \gamma \in (T,2T] : \max_{u \leq v \leq K} |P_{u,v}(\gamma)| \leq \frac{\ell_u}{ke^2}\Big\}.$$ 
Denote the set of $\gamma$ such that $\gamma \in \mathcal{T}_u$ for all $u \leq K$ by $\mathcal{T}'$. For $0 \leq j \leq K-1$, let $\mathcal{S}_j$ denote the subset of $\gamma \in (T,2T]$ such that $\gamma \in \mathcal{T}_h$ for all $h \leq j$, but $\gamma \notin \mathcal{T}_{j+1}$. 

\smallskip

Now for $\ell$ an integer, and for $z \in \mathbb{C}$, let
$$E_{\ell}(z) = \sum_{s \leq \ell} \frac{z^s}{s!}.$$
If $z \in \mathbb{R}$, then for $\ell$ even, $E_{\ell}(z) >0$. 
We will often use the fact that for $\ell$ an even integer and $z$ a complex number such that $|z| \leq \ell/e^2$, we have
$$e^{\Re(z)} \leq \max \Big\{ 1, |E_{\ell}(z)| \Big(1+\frac{1}{15e^{\ell}}\Big)\Big\},$$
whose proof can be found in \cite[Lemma 2.4]{bf_negative}.

\smallskip

Let $\nu(n)$ be the multiplicative function given by
$$\nu(p^j) = \frac{1}{j!},$$ for $p$ a prime and any integer $j$. We will frequently use the following fact: For any interval $I$, $s \in \mathbb{N}$ and $a(n)$ a completely multiplicative function, we have
\begin{equation}
 \Big(  \sum_{p \in I}  a(p) \Big)^s= s!  \sum_{\substack{ p |n \Rightarrow p \in I \\ \Omega(n) = s}} a(n) \nu(n),
 \label{identity}
 \end{equation} where $\Omega(n)$ denotes the number of prime factors of $n$, counting multiplicity.

\smallskip

We now state the following result, which is similar to \cite[Lemma 3.1]{bf_negative}.
\begin{lem}
\label{lem_initial}
For $\gamma \in (T,2T]$, we either have 
$$ \max_{0 \leq v \leq K} |P_{0,v}(\gamma)| > \frac{\ell_0}{ke^2},$$ or
$$|\zeta(\tfrac12+\tfrac{1}{\log T}+i\gamma)|^{-2k} \leq S_1(\gamma) +S_2(\gamma),$$ where 
\begin{align*}
S_1(\gamma)& =(\log \log T)^k\Big( \frac{1}{1-e^{-\beta_K}}\Big)^{ \frac{2k}{\beta_K } } \exp \Big( 2k \Big( \log \frac{\log T}{\Delta_K } \Big)^{\eta(\Delta_K)} \Big)  \\
&\qquad  \times  \prod_{h=0}^K \max \Big\{1, |E_{\ell_h}(kP_{h,K}(\gamma))|^2\Big(1+\frac{1}{15e^{\ell_h}}\Big)^2 \Big\}\\
&\qquad  \times \exp \Big( O \Big(  \frac{\Delta_K^2 e^{\pi \Delta_K}}{1+\Delta_K T} + \frac{\Delta_K \log (1+ \Delta_K \sqrt{T})}{\sqrt{T}}+1\Big)  \Big)
\end{align*}
and
\begin{align*}
S_2(\gamma) &=(\log \log T)^k \sum_{j=0}^{K-1}  \sum_{v=j+1}^K  \Big( \frac{1}{1-e^{-\beta_j}}\Big)^{ \frac{2k}{\beta_j } } \exp \Big( 2k \Big( \log \frac{\log T}{\Delta_j } \Big)^{\eta(\Delta_j)} \Big)  \\
&\qquad \times \prod_{h=0}^j\max \Big\{1, |E_{\ell_h}(kP_{h,j}(\gamma))|^2\Big(1+\frac{1}{15e^{\ell_h}}\Big)^2 \Big\}  \Big( \frac{ke^2}{\ell_{j+1}} |P_{j+1,v}(\gamma)|\Big)^{2s_{j+1}} \\
&\qquad \times  \exp \Big( O \Big(  \frac{\Delta_j^2 e^{\pi \Delta_j}}{1+\Delta_j T} + \frac{\Delta_j \log (1+ \Delta_j \sqrt{T})}{\sqrt{T}}+1\Big)  \Big).
\end{align*}
\end{lem}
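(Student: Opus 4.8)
The plan is to derive the pointwise bound from the lower bound for $\log|\zeta(\tfrac12+\tfrac1{\log T}+i\gamma)|$ in Lemma~\ref{lemma_lb}, specialized to a suitable choice of $\Delta$, combined with the dissection of the Dirichlet sum over the intervals $I_0,\dots,I_K$ and the elementary bound $e^{\Re(z)}\le\max\{1,|E_\ell(z)|(1+\tfrac1{15e^\ell})\}$, valid for $\ell$ even and $|z|\le\ell/e^2$, recorded above from \cite[Lemma~2.4]{bf_negative}. First I would fix $\gamma\in(T,2T]$ and distinguish cases according to the sets $\mathcal{T}_u$. If $\max_{0\le v\le K}|P_{0,v}(\gamma)|>\ell_0/(ke^2)$ then the first alternative holds and there is nothing to prove. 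Otherwise $\gamma\in\mathcal{T}_0$, and then either $\gamma\in\mathcal{T}'$ or there is a unique $j$ with $0\le j\le K-1$ such that $\gamma\in\mathcal{S}_j$ (namely $j+1$ is the smallest index for which $\gamma\notin\mathcal{T}_{j+1}$); these two cases are handled separately.

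Suppose first that $\gamma\in\mathcal{T}'$. Apply Lemma~\ref{lemma_lb} with $\Delta=\Delta_K$ and exponentiate $-2k$ times the resulting inequality. Using $\gamma\le 2T$ together with $2\pi\Delta_K=\beta_K\log T$ one checks that
\[
\exp\!\Big(-\tfrac{k\log(\gamma/2\pi)}{\pi\Delta_K}\log\big(1-e^{-2\pi\Delta_K/\log T}\big)\Big)\le\Big(\tfrac1{1-e^{-\beta_K}}\Big)^{2k/\beta_K},
\]
while $\exp(k\log\log\log T)=(\log\log T)^k$, and the remaining lower-order and $O(\cdot)$ terms exponentiate to precisely the corresponding factors in $S_1$. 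Since the primes $p\le e^{2\pi\Delta_K}=T^{\beta_K}$ are exactly those lying in $I_0\cup\dots\cup I_K$, we have $\sum_{p\le e^{2\pi\Delta_K}}b(p;\Delta_K)p^{-1/2-1/\log T-i\gamma}=\sum_{h=0}^K P_{h,K}(\gamma)$, so the exponential of the prime sum factors as $\prod_{h=0}^K\big(e^{\Re(kP_{h,K}(\gamma))}\big)^2$. For each $h\le K$ we have $|kP_{h,K}(\gamma)|\le\ell_h/e^2$ because $\gamma\in\mathcal{T}_h$ (taking $v=K$), so applying \cite[Lemma~2.4]{bf_negative} factor by factor bounds this product by $\prod_{h=0}^K\max\{1,|E_{\ell_h}(kP_{h,K}(\gamma))|^2(1+\tfrac1{15e^{\ell_h}})^2\}$. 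Multiplying all the pieces together yields $|\zeta(\tfrac12+\tfrac1{\log T}+i\gamma)|^{-2k}\le S_1(\gamma)\le S_1(\gamma)+S_2(\gamma)$.

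Now suppose $\gamma\in\mathcal{S}_j$. Running the identical argument with $\Delta=\Delta_j$: the primes $p\le T^{\beta_j}$ are those in $I_0\cup\dots\cup I_j$, so the exponential of the prime sum factors as $\prod_{h=0}^j\big(e^{\Re(kP_{h,j}(\gamma))}\big)^2$, and for $h\le j$ we again have $|kP_{h,j}(\gamma)|\le\ell_h/e^2$ since $\gamma\in\mathcal{T}_h$ (taking $v=j$). This produces the bound $(\log\log T)^k\,(1-e^{-\beta_j})^{-2k/\beta_j}\exp\!\big(2k(\log\tfrac{\log T}{\Delta_j})^{\eta(\Delta_j)}\big)\prod_{h=0}^j\max\{1,|E_{\ell_h}(kP_{h,j}(\gamma))|^2(1+\tfrac1{15e^{\ell_h}})^2\}\exp(O(\cdots))$, which is the $j$-th summand of $S_2$ apart from the factor $(\tfrac{ke^2}{\ell_{j+1}}|P_{j+1,v}(\gamma)|)^{2s_{j+1}}$. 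Finally, $\gamma\notin\mathcal{T}_{j+1}$ means there is some $v$ with $j+1\le v\le K$ and $\tfrac{ke^2}{\ell_{j+1}}|P_{j+1,v}(\gamma)|>1$, hence
\[
1\le\sum_{v=j+1}^K\Big(\tfrac{ke^2}{\ell_{j+1}}|P_{j+1,v}(\gamma)|\Big)^{2s_{j+1}};
\]
multiplying the previous bound by this nonnegative sum yields $|\zeta(\tfrac12+\tfrac1{\log T}+i\gamma)|^{-2k}\le S_2(\gamma)\le S_1(\gamma)+S_2(\gamma)$, the last step using that every summand of $S_1$ is nonnegative. This covers all cases.

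I expect the only delicate points to be bookkeeping ones rather than analytic ones: keeping track that $\log(\gamma/2\pi)\le\log T$ is what converts the $\Delta$-dependent main term of Lemma~\ref{lemma_lb} into the clean power of $(1-e^{-\beta})^{-1}$; verifying that the range $h\le v\le K$ in the definition of $\mathcal{T}_h$ is wide enough to contain the index ($v=K$, respectively $v=j$) needed to legitimately invoke \cite[Lemma~2.4]{bf_negative}; and checking that the error factors coming from the $O(\cdot)$ term of Lemma~\ref{lemma_lb} carry over unchanged after multiplication by $-2k$. All the substantive content is already contained in Lemma~\ref{lemma_lb}.
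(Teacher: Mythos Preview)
Your proposal is correct and follows essentially the same approach as the paper: the paper's proof is a one-sentence reference to \cite[Lemma~3.1]{bf_negative}, observing the trichotomy $\gamma\notin\mathcal T_0$, $\gamma\in\mathcal T'$, or $\gamma\in\mathcal S_j$ and then applying Lemma~\ref{lemma_lb}, which is exactly what you carry out in detail. One minor wording slip: when $\gamma\in\mathcal S_j$, what you actually obtain after multiplying by $\sum_{v=j+1}^K(\cdot)^{2s_{j+1}}$ is the $j$-th summand of $S_2$, not $S_2$ itself; the passage to $S_2$ then uses that the remaining summands (over the other values of $j$) are nonnegative, and the final inequality $S_2\le S_1+S_2$ uses $S_1\ge 0$.
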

\begin{proof}
The proof follows as the proof of \cite[Lemma 3.1]{bf_negative}, by noticing that for $\gamma \in (T,2T]$, either $\gamma \notin \mathcal{T}_0$, or $\gamma \in \mathcal{T}'$, or $\gamma \in \mathcal{S}_j$ for some $0 \leq j \leq K-1$, and then using Lemma \ref{lemma_lb}.
\end{proof}
We also have the following propositions, whose proofs we postpone to Section \ref{section_proofs_prop}. 
\begin{prop}
\label{sum0}
Assume RH. For $0 \leq v \leq K$ and $\beta_0 s_0 \leq 1- \log\log T/\log T$, we have
$$\sum_{\gamma \in (T,2T]} |P_{0,v}(\gamma)|^{2s_0} \ll N(T) s_0! b(\Delta_v)^{2s_0} \Big(  \log \frac{\log T}{\Delta_v } \Big)^{2s_0 \eta(\Delta_v)} ( \log \log T^{\beta_0})^{s_0}.$$
\end{prop}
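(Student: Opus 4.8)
The plan is to open up $|P_{0,v}(\gamma)|^{2s_0}$ as the square of a short Dirichlet polynomial and then apply the discrete mean-value theorem of Section~\ref{mvt}. Since $b(\,\cdot\,;\Delta_v)$ has been extended to a completely multiplicative function and $p\mapsto p^{-1/2-1/\log T-i\gamma}$ is completely multiplicative, the identity \eqref{identity}, applied with $a(p)=b(p;\Delta_v)\,p^{-1/2-1/\log T-i\gamma}$ and the interval $I_0$, gives
\[
P_{0,v}(\gamma)^{s_0}=s_0!\sum_{\substack{p\mid m\Rightarrow p\in I_0\\ \Omega(m)=s_0}}\frac{b(m;\Delta_v)\,\nu(m)}{m^{1/2+1/\log T}}\,m^{-i\gamma}.
\]
Multiplying by the complex conjugate, $|P_{0,v}(\gamma)|^{2s_0}=(s_0!)^2|A(\gamma)|^2$, where $A(\gamma)=\sum_n a_n n^{-i\gamma}$, the sum running over integers $n$ with $\Omega(n)=s_0$ and every prime factor in $I_0$, and $a_n=b(n;\Delta_v)\,\nu(n)\,n^{-1/2-1/\log T}$. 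In particular $A$ has length at most $T^{\beta_0 s_0}$, and the hypothesis $\beta_0 s_0\le 1-\log\log T/\log T$ is precisely the statement that $T^{\beta_0 s_0}\le T/\log T$.

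Applying the discrete mean-value theorem of Section~\ref{mvt} --- the zero-averaged analogue of the Montgomery--Vaughan estimate --- to the Dirichlet polynomial $A$ of length $\le T/\log T$, the off-diagonal terms arising from the Landau--Gonek formula are absorbed by the diagonal, and we obtain
\[
\sum_{T<\gamma\le 2T}|P_{0,v}(\gamma)|^{2s_0}=(s_0!)^2\sum_{T<\gamma\le 2T}|A(\gamma)|^2\ll (s_0!)^2\big(N(2T)-N(T)\big)\sum_n|a_n|^2.
\]
For $\Omega(n)=s_0$, complete multiplicativity together with \eqref{improved_bd} and \eqref{bn} gives $|b(n;\Delta_v)|\le b(\Delta_v)^{s_0}\big(\log(\log T/\Delta_v)\big)^{s_0\eta(\Delta_v)}$; moreover $\nu(n)^2\le\nu(n)$, $n^{-2/\log T}\le 1$, and $N(2T)-N(T)\ll N(T)$. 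Hence the right-hand side is
\[
\ll (s_0!)^2\,N(T)\,b(\Delta_v)^{2s_0}\Big(\log\tfrac{\log T}{\Delta_v}\Big)^{2s_0\eta(\Delta_v)}\sum_{\substack{p\mid n\Rightarrow p\in I_0\\ \Omega(n)=s_0}}\frac{\nu(n)}{n},
\]
and applying \eqref{identity} once more, now with $a(p)=1/p$, the remaining sum equals $\tfrac1{s_0!}\big(\sum_{p\in I_0}1/p\big)^{s_0}$, while $\sum_{p\le T^{\beta_0}}1/p\ll\log\log T^{\beta_0}$ by Mertens' theorem. Combining these estimates gives the asserted bound.

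The combinatorial expansion and the Mertens step are routine; the substantive ingredient is the discrete mean-value theorem of Section~\ref{mvt}, and the point at which the hypothesis $\beta_0 s_0\le 1-\log\log T/\log T$ enters is the claim that, for a Dirichlet polynomial of length at most $T/\log T$, the off-diagonal terms in the discrete second moment over zeros are dominated by the diagonal --- the analogue of the usual length restriction in Montgomery--Vaughan's theorem, proved here via the Landau--Gonek explicit formula. (One should also observe that $\Delta_v\ge\Delta_0$ since $\beta_v\ge\beta_0$, so that $I_0=(1,T^{\beta_0}]\subseteq(1,e^{2\pi\Delta_v}]$ and $b(p;\Delta_v)$ is defined for every $p\in I_0$.)
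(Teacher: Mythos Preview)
Your argument is correct and follows the same route as the paper: expand $P_{0,v}^{s_0}$ via \eqref{identity}, apply Theorem~\ref{mvt}, and finish with $\nu(n)^2\le\nu(n)$, \eqref{improved_bd}, and Mertens. The one point you gloss over is the secondary main term $-\tfrac{T}{\pi}\Re\sum_{kn\le x}\overline{a_{kn}}a_n\Lambda(k)/(kn)$ from Theorem~\ref{mvt}: you assert it is ``absorbed by the diagonal'', whereas the paper observes that it in fact \emph{vanishes}, since the support condition $\Omega(kn)=\Omega(n)=s_0$ forces $\Omega(k)=0$, i.e.\ $k=1$, and $\Lambda(1)=0$. (A trivial bound would also work, giving $O(T\log x)\sum_n|a_n|^2/n\ll N(T)\sum_n|a_n|^2/n$, so your claim is not wrong---but the vanishing is the cleaner and intended reason.)
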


\begin{prop}
\label{j+1}
Assume RH. Let $0 \leq j \leq K-1$. For $\beta_0$ as given in \eqref{s0} or \eqref{beta0_again}, and for $\sum_{h=0}^j \ell_h \beta_h+s_{j+1} \beta_{j+1} \leq 1- \log \log T/ \log T$ and $j+1\leq v \leq K$, we have
\begin{align*}
\sum_{\gamma \in (T,2T]} & \prod_{h=0}^j |E_{\ell_h}(k P_{h,j}(\gamma))|^2 |P_{j+1,v}(\gamma)|^{2s_{j+1}} \ll N(T) s_{j+1}! \big( \log T^{\beta_j} \big)^{k^2 b(\Delta_j)^2 (\log \frac{\log T}{\Delta_j })^{2 \eta(\Delta_j)}} \\
 & \times   b(\Delta_{j+1})^{2s_{j+1}} \Big( \log \frac{\log T}{\Delta_{j+1} } \Big)^{2s_{j+1} \eta(\Delta_{j+1})} (\log r)^{s_{j+1}}.
 \end{align*}
\end{prop}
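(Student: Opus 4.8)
The plan is to reduce Proposition \ref{j+1} to a moment computation for a product of Dirichlet polynomials over zeros of $\zeta(s)$, and then to invoke the discrete mean-value theorem of Section \ref{mvt} together with the combinatorial identity \eqref{identity}. First I would expand each factor $E_{\ell_h}(kP_{h,j}(\gamma))$ as a Dirichlet polynomial: since $E_{\ell_h}(z) = \sum_{s\le \ell_h} z^s/s!$ and, by \eqref{identity}, $P_{h,j}(\gamma)^s = s!\sum_{\Omega(n)=s,\ p\mid n\Rightarrow p\in I_h} b(n;\Delta_j)\nu(n) n^{-1/2-1/\log T-i\gamma}$, the product $\prod_{h=0}^j E_{\ell_h}(kP_{h,j}(\gamma))$ becomes a single Dirichlet polynomial supported on integers $n$ all of whose prime factors lie in $I_0\cup\dots\cup I_j$, with at most $\ell_h$ prime factors (with multiplicity) in each $I_h$; the length of this polynomial is at most $\prod_{h=0}^j (T^{\beta_h})^{\ell_h} = T^{\sum_h \ell_h\beta_h}$. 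Similarly $|P_{j+1,v}(\gamma)|^{2s_{j+1}} = P_{j+1,v}(\gamma)^{s_{j+1}}\overline{P_{j+1,v}(\gamma)}^{s_{j+1}}$ is a Dirichlet polynomial of length at most $T^{2s_{j+1}\beta_{j+1}}$ supported on integers with exactly $s_{j+1}$ prime factors from $I_{j+1}$ in the holomorphic part and $s_{j+1}$ in the conjugated part. Multiplying these, the total length is at most $T^{\sum_{h=0}^j \ell_h\beta_h + s_{j+1}\beta_{j+1}}$, and the hypothesis that this exponent is $\le 1-\log\log T/\log T$ is exactly what is needed so the diagonal term dominates in the mean-value estimate.

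Next I would apply the discrete mean-value theorem (the analogue of Montgomery--Vaughan averaged over zeros, proved via Landau--Gonek in Section \ref{mvt}): for a Dirichlet polynomial $A(\gamma) = \sum_{n\le N} a_n n^{-i\gamma}$ with $N \le T^{1-\log\log T/\log T}$, one has $\sum_{T<\gamma\le 2T} |A(\gamma)|^2 \ll N(T)\sum_{n\le N} |a_n|^2$ (plus negligible off-diagonal error). Applying this with $A(\gamma) = \big(\prod_{h=0}^j E_{\ell_h}(kP_{h,j}(\gamma))\big) P_{j+1,v}(\gamma)^{s_{j+1}}$ yields a sum over pairs of integers with a common value, which after expanding collapses to a diagonal. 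The diagonal sum factors over primes because all the arithmetic functions involved ($b(\cdot;\Delta)$, $\nu$) are multiplicative and the intervals $I_h$ are disjoint: the contribution from primes in $I_h$ (for $h\le j$) is bounded by an Euler product of the shape $\prod_{p\in I_h}\big(1 + k^2|b(p;\Delta_j)|^2/p + \dots\big)$, which using \eqref{improved_bd}--\eqref{bn} and $\sum_{p\in I_h} 1/p = \log r + o(1)$ (by Mertens, since $I_h = (T^{\beta_{h-1}}, T^{\beta_h}]$ with $\beta_h = r\beta_{h-1}$) is $\exp\big(k^2 b(\Delta_j)^2(\log\frac{\log T}{\Delta_j})^{2\eta(\Delta_j)}(\log T^{\beta_h})\cdot\frac{\log r}{\log T^{\beta_h}}\cdot(1+o(1))\big)$; collecting over $h=0,\dots,j$ gives the factor $(\log T^{\beta_j})^{k^2 b(\Delta_j)^2(\log\frac{\log T}{\Delta_j})^{2\eta(\Delta_j)}}$ in the statement (here one uses that the telescoping product $\prod_{h=0}^j e^{c\log r}$ with $\log T^{\beta_h}$ relating to $\log T^{\beta_j}$ through $\beta_h = r^{h-j}\beta_j$ reproduces the stated power — this bookkeeping mirrors exactly the one in \cite[Lemma 3.1]{bf_negative}). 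The contribution from primes in $I_{j+1}$, where we have exactly $s_{j+1}$ factors on each side, contributes $s_{j+1}!\,b(\Delta_{j+1})^{2s_{j+1}}(\log\frac{\log T}{\Delta_{j+1}})^{2s_{j+1}\eta(\Delta_{j+1})}(\log r)^{s_{j+1}}$ up to constants, again via \eqref{identity}, \eqref{improved_bd}--\eqref{bn}, and Mertens' estimate for $\sum_{p\in I_{j+1}}1/p$.

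The truncation on $\beta_0$ (given by \eqref{s0} or \eqref{beta0_again}) is used to guarantee that the Euler factor over the smallest interval $I_0$ is well-behaved, i.e. that the primes there are large enough that the $1/p$-sums are genuinely $\log r + o(1)$ and that higher prime-power terms in the Euler product are negligible; I would insert that input at the point where $I_0$ is handled, exactly as in \cite{bf_negative}. The main obstacle, and the place where real work is needed rather than quoting \cite{bf_negative}, is the passage from the continuous Montgomery--Vaughan setting to the discrete one: one must verify that the off-diagonal terms produced by the Landau--Gonek explicit formula are genuinely lower-order for Dirichlet polynomials of length up to $T^{1-\log\log T/\log T}$ — in particular that the error incurred is controlled uniformly in the (exponentially many, and $\gamma$-dependent) coefficients $a_n$ that arise from expanding the $E_{\ell_h}$'s, so that one can bound $\sum_{\gamma}|A(\gamma)|^2$ by $N(T)\sum_n |a_n|^2$ with no loss beyond the $o(1)$ already absorbed. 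This is precisely the content of the discrete mean-value theorem of Section \ref{mvt}, so once that is in hand the proof of Proposition \ref{j+1} is a matter of organizing the expansion and the Euler-product estimates as above; I would present it by first stating the Dirichlet polynomial expansion, then applying the mean-value theorem, then evaluating the resulting diagonal by an Euler product, interval by interval.
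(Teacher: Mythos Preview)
Your overall architecture is correct: expand each $E_{\ell_h}(kP_{h,j})$ and $P_{j+1,v}^{s_{j+1}}$ as Dirichlet polynomials via \eqref{identity}, multiply them into a single polynomial of length at most $T^{\sum_h \ell_h\beta_h + s_{j+1}\beta_{j+1}} \le T/\log T$, apply the discrete mean-value theorem, and then estimate the resulting diagonal by a product over the intervals $I_h$. That is exactly what the paper does, and your description of the diagonal Euler-product bookkeeping is essentially right.

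The gap is in your treatment of the off-diagonal. Theorem~\ref{mvt} does \emph{not} assert that $\sum_{\gamma}|A(\gamma)|^2 \ll N(T)\sum_n |a_n|^2/n$ for polynomials of admissible length; it gives an exact formula with \emph{two} main terms, the diagonal $N(T)\sum_n |a_n|^2/n$ and a second, $\Lambda$-weighted cross term
\[
-\frac{T}{\pi}\,\Re \sum_{kn\le x}\frac{\overline{a_{kn}}\,a_n\,\Lambda(k)}{kn},
\]
together with an error $O\big(x(\log xT)^2\sum_n |a_n|^2/n\big)$ which, and only which, the length hypothesis renders negligible. In Proposition~\ref{sum0} this $\Lambda$-term vanished identically, because the coefficients there were supported on integers with exactly $s_0$ prime factors, forcing $k=1$. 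Here that mechanism fails: the truncated exponentials $E_{\ell_h}$ allow $\Omega(n_h)$ to range over $\{0,1,\dots,\ell_h\}$, so the $\Lambda$-term survives and is \emph{a priori} of the same order as the diagonal. The paper controls it by showing (inequality \eqref{interm}) that for each $0\le m\le j$ the contribution from $k=p^a$ with $p\in I_m$ can be reabsorbed into the $m$-th diagonal factor; this requires a case split according to whether $k\,b(\Delta_j)\big(\log\tfrac{\log T}{\Delta_j}\big)^{\eta(\Delta_j)}$ is below or above $1$, and for $m=0$ it is precisely here --- not in the diagonal Euler product, as you suggest --- that the specific choice of $\beta_0$ from \eqref{s0} or \eqref{beta0_again} is invoked, to guarantee $\beta_0\sum_{p\le T^{\beta_0}}1/p = O(1)$. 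So the sentence ``once the mean-value theorem is in hand the proof is a matter of organizing the expansion'' understates what remains: bounding the $\Lambda$-weighted term is a separate, non-formal step that your plan does not address.
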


\begin{prop}
\label{K_lemma}
Assume RH. For $\beta_0$ as given in \eqref{s0} or \eqref{beta0_again},  and for $\sum_{h=0}^K \ell_h \beta_h \leq 1-\log \log T/ \log T$, we have
\begin{align*}
\sum_{\gamma \in (T,2T]} & \prod_{h=0}^K |E_{\ell_h}(k P_{h,K}(\gamma))|^2   \ll N(T) \big( \log T^{\beta_K} \big)^{k^2 b(\Delta_K)^2 (\log \frac{\log T}{\Delta_K })^{2 \eta(\Delta_K)}} .
 \end{align*}
\end{prop}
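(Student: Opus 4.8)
The plan is to prove Proposition \ref{K_lemma} by reducing the product of truncated exponentials to an ordinary Dirichlet polynomial and then applying the discrete mean-value theorem of Section \ref{mvt}. First I would expand each factor $|E_{\ell_h}(kP_{h,K}(\gamma))|^2$: since $E_{\ell_h}(z)=\sum_{s\le \ell_h} z^s/s!$ and $P_{h,K}(\gamma)$ is a Dirichlet polynomial supported on primes in $I_h=(T^{\beta_{h-1}},T^{\beta_h}]$ (with $I_0=(1,T^{\beta_0}]$), the identity \eqref{identity} lets me write $E_{\ell_h}(kP_{h,K}(\gamma))=\sum_{n} c_h(n) n^{-1/2-1/\log T-i\gamma}$ where $n$ ranges over integers all of whose prime factors lie in $I_h$ and $\Omega(n)\le \ell_h$, and the coefficients $c_h(n)=k^{\Omega(n)}\nu(n)\prod_{p|n} b(p;\Delta_K)$ are supported on a set of integers of size at most $\prod_h T^{\ell_h\beta_h}=T^{\sum_h \ell_h\beta_h}$. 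Multiplying over $0\le h\le K$, the full product $\prod_h E_{\ell_h}(kP_{h,K}(\gamma))$ becomes a single Dirichlet polynomial $A(\gamma)=\sum_{n\le N} a(n) n^{-1/2-1/\log T-i\gamma}$ with $N\le T^{\sum_h \ell_h\beta_h}\le T^{1-\log\log T/\log T}<T$, the hypothesis of the proposition being exactly what guarantees this length restriction. Then $\prod_h |E_{\ell_h}(kP_{h,K}(\gamma))|^2=|A(\gamma)|^2$.

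Next I would apply the discrete mean value theorem (the analogue of Montgomery--Vaughan proved in Section \ref{mvt}, resting on the Landau--Gonek formula) to $\sum_{T<\gamma\le 2T}|A(\gamma)|^2$. Because the length $N$ of $A$ is smaller than $T$, this gives a clean diagonal bound $\sum_{\gamma\in(T,2T]}|A(\gamma)|^2 \ll N(T)\sum_{n\le N}\frac{|a(n)|^2}{n^{1+2/\log T}}$, plus lower-order off-diagonal terms which are negligible under the length restriction. The remaining task is purely arithmetic: estimate the Dirichlet series $\sum_n |a(n)|^2 n^{-1-2/\log T}$. Since $a(n)=\sum_{n=n_0\cdots n_K} \prod_h c_h(n_h)$ with the $n_h$ supported on disjoint sets of primes, the sum over $n$ factors as a product over $h$ of $\sum_{p|m\Rightarrow p\in I_h,\ \Omega(m)\le \ell_h} |c_h(m)|^2 m^{-1-2/\log T}$. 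Dropping the constraint $\Omega(m)\le\ell_h$ only increases each factor, and each unconstrained factor is an Euler product $\prod_{p\in I_h}\big(\sum_{j\ge 0} k^{2j}\nu(p^j)^2 |b(p;\Delta_K)|^{2j} p^{-j(1+2/\log T)}\big)$; taking logarithms, the leading term is $\sum_{p\in I_h} k^2|b(p;\Delta_K)|^2 p^{-1}\le k^2 b(\Delta_K)^2(\log\frac{\log T}{\Delta_K})^{2\eta(\Delta_K)}\sum_{p\in I_h}p^{-1}$ by the coefficient bound \eqref{improved_bd}, and by Mertens' theorem $\sum_{0\le h\le K}\sum_{p\in I_h}p^{-1}=\sum_{p\le T^{\beta_K}}p^{-1}+O(1)=\log\log T^{\beta_K}+O(1)$, while the higher-order $j\ge 2$ terms contribute only $O(1)$ in total. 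This yields $\sum_n |a(n)|^2 n^{-1-2/\log T}\ll (\log T^{\beta_K})^{k^2 b(\Delta_K)^2(\log\frac{\log T}{\Delta_K})^{2\eta(\Delta_K)}}$, which combined with the mean-value bound is exactly the claimed estimate.

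The main obstacle I anticipate is controlling the \emph{off-diagonal} contribution in the discrete mean value theorem and confirming it is genuinely lower order. Unlike the continuous Montgomery--Vaughan estimate, the discrete version averaged over zeros (via Landau--Gonek) produces secondary main terms involving sums like $\sum_{mn\le N}\Lambda(\cdot)$-type weights or terms of size $\frac{T}{\log T}\sum_{n}\frac{|a(n)|^2}{n}\cdot(\text{something})$, and one must check that the length condition $\sum_h \ell_h\beta_h\le 1-\log\log T/\log T$ (rather than merely $<1$) is strong enough to beat these by a factor that absorbs into the implied constant. A secondary technical point is the bookkeeping to ensure the product of the individual coefficient supports really does have all its mass on integers $\le T^{\sum_h\ell_h\beta_h}$ and that the completely multiplicative extension of $b(\cdot;\Delta_K)$ interacts correctly with $\nu$ and the cutoffs $\Omega(n_h)\le\ell_h$; but this is routine given identity \eqref{identity}. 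I would also note that this proposition is the ``clean'' case $v=K$ with no extra $|P_{j+1,v}|^{2s_{j+1}}$ factor, so it is strictly simpler than Proposition \ref{j+1} and serves as the template for that argument.
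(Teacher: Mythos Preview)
Your proposal is correct and follows essentially the same route as the paper: expand each $E_{\ell_h}(kP_{h,K})$ via \eqref{identity} into a Dirichlet polynomial supported on integers with prime factors in $I_h$, multiply the factors (disjoint prime supports make the factorization of $a(n)$ unique), apply Theorem~\ref{mvt}, and bound the resulting diagonal sum by the Euler-product computation you describe. The paper in fact omits the proof of Proposition~\ref{K_lemma} as ``very similar'' to that of Proposition~\ref{j+1}, and your obstacle paragraph correctly anticipates that the $-\tfrac{T}{\pi}\sum \Lambda(k)\cdots$ off-diagonal term from Theorem~\ref{mvt} is not automatically negligible but must be absorbed into the diagonal bound via the argument surrounding \eqref{interm} (which is strictly simpler here since there is no extra $|P_{j+1,v}|^{2s_{j+1}}$ factor).
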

\section{A mean-value theorem}
\label{mvt}
We now prove the following mean-value theorem, which is of independent interest.

\begin{thm} \label{mvt}
Assume RH
. Then for $x\ge 2$ and $T >1$, we have
\[
\begin{split}
\sum_{0<\gamma\le T} \Big| \sum_{n\le x} \frac{a_n}{n^{\rho}} \Big|^2 &= N(T) \sum_{n \le x} \frac{|a_n|^2}{n} - \frac{T}{\pi} \Re\Big( \sum_{kn \le x} \frac{\overline{a_{kn}} a_n \Lambda(k)}{kn}\Big) 
 + O\Big( x \, (\log (xT))^{2} \sum_{n \le x} \frac{|a_n|^2}{n} \Big),
\end{split}
\]
where $\{ a_n \}$ is any sequence of complex numbers. 
\end{thm}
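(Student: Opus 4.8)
The plan is to expand the square and reduce everything to evaluating the sum $\sum_{0<\gamma\le T} (m/n)^{-i\gamma}$ over zeros, for $m,n\le x$. Writing out $|\sum_{n\le x} a_n n^{-\rho}|^2 = \sum_{m,n\le x} a_m \overline{a_n} m^{-\rho}\overline{n^{-\rho}}$, and using RH so that $\rho = 1/2+i\gamma$ (hence $m^{-\rho}\overline{n^{-\rho}} = (mn)^{-1/2}(m/n)^{-i\gamma}$), the inner sum over zeros is exactly what the Landau--Gonek explicit formula controls. The diagonal $m=n$ contributes $N(T)\sum_{n\le x}|a_n|^2/n$, which is the main term; the off-diagonal $m\ne n$ is where the $-\frac{T}{\pi}\Re(\cdots)$ term and the error term come from.

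The key input is the Landau--Gonek explicit formula \cite{gonek_landau}: for $x>1$ a fixed real number (here we need it uniformly for $x\ge 2$),
\[
\sum_{0<\gamma\le T} x^{i\gamma} = -\frac{T}{2\pi}\,\Lambda_0(x) + O\big(x\log(2xT)\log\log(3x)\big) + (\text{smaller terms}),
\]
where $\Lambda_0(x) = \Lambda(x)$ when $x$ is an integer (a prime power) and $0$ otherwise; more precisely, one uses the standard form with an error term $O(x\log(2xT)) + O(\log T)$ plus lower-order pieces, and one must be careful to track the dependence on $x$ since here $x$ ranges up to the length of the Dirichlet polynomial. I would apply this with $x$ replaced by $m/n$ and $n/m$ for each ordered pair $(m,n)$ with $m,n\le x$, noting that $\Lambda_0(m/n)\ne 0$ precisely when $m/n$ (or $n/m$) is a prime power $k$, i.e.\ when $m=kn$ (or $n=km$). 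Summing the main terms over such pairs produces $-\frac{T}{2\pi}\big(\sum_{kn\le x}\frac{\overline{a_{kn}}a_n\Lambda(k)}{kn} + \overline{(\cdots)}\big) = -\frac{T}{\pi}\Re\big(\sum_{kn\le x}\frac{\overline{a_{kn}}a_n\Lambda(k)}{kn}\big)$, matching the claimed secondary term after accounting for the $(mn)^{-1/2}$ normalization (with $m=kn$ one gets $(mn)^{-1/2}=(kn^2)^{-1/2}$, and $k^{-1/2}\Lambda(k)$ should be reconciled — in fact the cleanest route is to note $\Lambda(k) = \Lambda_0(k)$ and $k^{1/2}\cdot k^{-1/2}=1$, so the $(mn)^{-1/2}$ combines with the explicit-formula variable to give exactly $1/(kn)$).

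For the error term, I would bound the contribution of the explicit-formula error over all pairs $(m,n)$, $m,n\le x$, by
\[
\sum_{m,n\le x} \frac{|a_m||a_n|}{(mn)^{1/2}}\, O\!\big((m/n+n/m)\log(xT)\log\log(3x)\big),
\]
and then apply Cauchy--Schwarz / the elementary inequality $|a_m||a_n|\le \tfrac12(|a_m|^2+|a_n|^2)$ together with $(mn)^{-1/2}(m/n) = m^{1/2}n^{-3/2}$ and the bound $\sum_{n\le x} n^{-3/2}\ll 1$, $\sum_{m\le x} m^{1/2}\ll x^{3/2}$ — wait, this needs care. The correct bookkeeping is: after symmetrizing, the typical term is $\frac{|a_m|^2}{m}\cdot m \cdot \frac{m/n}{(mn)^{1/2}}\cdot(mn)^{1/2}/m$... rather, I would simply write $\frac{|a_m||a_n|}{(mn)^{1/2}}\cdot \frac{m}{n} = |a_m||a_n|\cdot \frac{m^{1/2}}{n^{3/2}} \le \tfrac12\big(|a_m|^2 \tfrac{m^{1/2}}{n^{3/2}}\cdot\tfrac{m^{1/2}}{n^{1/2}}\cdot\tfrac{n^{1/2}}{m^{1/2}}+\cdots\big)$ and organize the sum so that one variable gives $\sum_{n\le x} 1/n \ll \log x$ and the other gives $\sum_{m\le x}|a_m|^2/m \cdot m$; the factor $m\le x$ is exactly where the leading $x$ in $O(x(\log xT)^2\sum|a_n|^2/n)$ comes from, and the extra $\log(xT)\cdot\log\log(3x)\le \log(xT)^2$ (for $x$ not too small relative to... actually $\log\log(3x)\ll \log(xT)$ trivially) supplies the square of the logarithm.

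The main obstacle I anticipate is the uniformity in $x$: the Landau--Gonek formula is classically stated for \emph{fixed} $x$, and here $x$ can grow with $T$, so I must use a version with explicit $x$-dependence in the error term and verify that when it is applied to the $O(x^2)$ pairs $(m,n)$ the accumulated error is still only $O(x(\log xT)^2\sum_{n\le x}|a_n|^2/n)$ rather than something larger like $x^2$. This requires that the error term in the explicit formula for $\sum_{0<\gamma\le T}(m/n)^{i\gamma}$ be $O((m/n)\log(\cdots))$ — i.e.\ linear in the argument, not in $x$ itself — which is indeed the shape of the sharpest forms of the formula; combined with the $(mn)^{-1/2}$ weight and the symmetrization, this is what makes the double sum collapse to something of size $x$ times a single weighted sum. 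A secondary technical point is handling the terms with $m/n$ close to $1$ (i.e.\ $m$ close to $n$ but unequal), where $\log(m/n)$ is small and one must use the more precise error terms (or the trivial bound $\sum_{0<\gamma\le T}1 = O(T\log T)$) to avoid a spurious blow-up; these near-diagonal terms are few and contribute acceptably to the stated error.
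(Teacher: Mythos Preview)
Your overall strategy matches the paper's: expand the square, extract the diagonal $N(T)\sum_n |a_n|^2/n$, and apply the Landau--Gonek explicit formula to $\sum_{0<\gamma\le T}(m/n)^{\rho}$ for the off-diagonal pairs. The secondary main term $-\frac{T}{\pi}\Re(\cdots)$ arises exactly as you describe, from the pairs with $m=kn$.

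However, there is a genuine gap in your error analysis. The Landau--Gonek formula (Lemma~\ref{LG} in the paper) has \emph{three} error terms, not one:
\[
O\big(y\log(2yT)\log\log(3y)\big), \qquad O\Big(\log y\, \min\Big\{T,\tfrac{y}{\langle y\rangle}\Big\}\Big), \qquad O\Big(\log(2T)\,\min\Big\{T,\tfrac{1}{\log y}\Big\}\Big),
\]
where $\langle y\rangle$ is the distance from $y$ to the nearest prime power other than $y$. You treat only the first explicitly and dismiss the rest as ``smaller terms'' or ``lower-order pieces''. They are not: in the paper's proof all three contribute errors of the same final size $O(x(\log xT)^2\sum_n|a_n|^2/n)$, and the second one---call its contribution $\Sigma_3$---is in fact the hardest to control. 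When $y=m/n$ is close to a prime power $q$, one has $\langle m/n\rangle\asymp |r|/n$ where $m=qn+r$, so $y/\langle y\rangle$ can be as large as $m$; naively summing this over all pairs is too crude. The paper's treatment writes $m=qn+r$ with $-n/2<r\le n/2$, splits according to whether $q$ is a prime power and whether $r=0$, uses the weighted AM--GM trick $|a_ma_n|\le \tfrac12(\Delta^{-1}|a_m|^2+\Delta|a_n|^2)$ with an optimized $\Delta$, and in the dual sum $\Sigma_{32}$ exploits that the number of prime powers dividing $m-r$ is $O(\log m)$. None of this is visible in your sketch.

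Your ``secondary technical point'' about $m/n$ close to $1$ is the third error term above, and again requires more than you indicate: one cannot simply use the trivial bound $N(T)$ on those pairs (there are too many of them), and the paper instead uses $\log(m/n)>(m-n)/m$ to reduce to $\sum_{m\ne n}|a_ma_n|/|m-n|$, which is then bounded by the diagonal via AM--GM. Finally, a minor point: your stated form of Landau--Gonek for $\sum x^{i\gamma}$ is off by a factor $x^{-1/2}$ in the main term (the formula is for $\sum x^{\rho}$), though this would sort itself out once you actually compute.
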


The proof relies on following version of the Landau-Gonek explicit formula \cite[Theorem 1]{gonek_landau}. 
\begin{lem} \label{LG}
Let $y,T>1$.  Then 
\[
\begin{split}
\sum_{0 < \gamma \le T} y^{\rho} &= -\frac{T}{2\pi} \Lambda(y) + O\Big(y \log(2y T)\log\log(3y)\Big) 
\\
&\qquad \qquad + O\bigg( \log y  \min\Big\{ T, \frac{y}{\langle y \rangle}\Big\} \bigg)+ O\bigg( \log(2T) \min \Big\{T, \frac{1}{\log y} \Big\} \bigg),
\end{split}
\]
where $\langle y \rangle$ denotes the distance from $y$ to the nearest prime power other than $y$. Here, if $y\in \mathbb{N}$, $\Lambda(y)$ denotes the von Mangoldt function and $\Lambda(y)=0$, otherwise.
\end{lem}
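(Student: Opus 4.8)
\textbf{Proof proposal for Lemma \ref{LG} (the Landau--Gonek explicit formula).}

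The plan is to apply the argument principle to $-\zeta'/\zeta$ on a positively oriented rectangular contour $\mathcal{R}$ with vertices $c+i$, $c+iT$, $1-c+iT$, $1-c+i$, where $c = 1 + 1/\log y > 1$ is chosen so that the Dirichlet series $-\zeta'(s)/\zeta(s) = \sum_{n} \Lambda(n) n^{-s}$ converges absolutely on the right edge. By the residue theorem,
\[
\frac{1}{2\pi i} \int_{\mathcal{R}} \left( -\frac{\zeta'}{\zeta}(s) \right) y^{s}\, ds = \sum_{0 < \gamma \le T} y^{\rho} + (\text{contributions from the pole at } s=1 \text{ and trivial zeros}),
\]
where the pole of $\zeta$ at $s=1$ contributes $-y$ (a term we will absorb into the error since $|{-y}| \le y\log(2yT)$) and any trivial zeros in the strip contribute negligibly for $T$ bounded away from ordinates of zeros; here one should also take $T$ to avoid coinciding with an ordinate $\gamma$, or interpret the sum with the usual $\tfrac12$-weight convention, with the discrepancy absorbed into the stated error terms. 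First I would handle the right edge $\re s = c$: expanding $-\zeta'/\zeta$ into its Dirichlet series and integrating term by term gives
\[
\frac{1}{2\pi i} \int_{c+i}^{c+iT} \left( -\frac{\zeta'}{\zeta}(s) \right) y^{s}\, ds = \sum_{n \ge 1} \Lambda(n) \, \frac{1}{2\pi i} \int_{c+i}^{c+iT} \left( \frac{y}{n} \right)^{s} \frac{ds}{1} \cdot \frac{1}{\log(y/n)}\bigg|_{\text{when } n \neq y},
\]
but more cleanly: the $n = y$ term (present only if $y \in \mathbb{N}$) contributes $\Lambda(y) \cdot \frac{1}{2\pi i}\int_{c+i}^{c+iT} y^0\, (y/n)^{0}\,ds$... rather, it contributes $\frac{\Lambda(y)}{2\pi i}\int_c^c$-type main term $-\frac{T}{2\pi}\Lambda(y)$ after accounting for the vertical length, while the terms $n \neq y$ are bounded using the elementary estimate $\left| \int_{c+i}^{c+iT} (y/n)^s\, ds \right| \ll \min\{T, 1/|\log(y/n)|\}$ combined with $\sum_n \Lambda(n)(y/n)^c \min\{T,|\log(y/n)|^{-1}\}$, which after splitting into $n$ near $y$ (giving the $\langle y\rangle$ and $y$ terms) and $n$ far from $y$ (giving $y\log(2yT)\log\log(3y)$ after summing the tail geometric series with the prime-power density $\sum_{n\le x}\Lambda(n)\ll x$) produces exactly the three error terms in the statement.

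Next I would bound the remaining three edges of $\mathcal{R}$. On the bottom edge $\im s = 1$ and the left edge $\re s = 1-c$, one uses the functional equation to relate $-\zeta'/\zeta(s)$ to $-\zeta'/\zeta(1-s)$ plus $\log$-type factors: writing $\zeta(s) = \chi(s)\zeta(1-s)$ with $\chi'(s)/\chi(s) = -\log(|t|/2\pi) + O(1/|t|)$, we get on $\re s = 1-c$ that $-\zeta'/\zeta(s) = \log(|t|/2\pi) + O(1) - \zeta'/\zeta(1-s)$, and $1 - s$ lies on $\re = c$ where $\zeta'/\zeta$ is bounded. Since $y^{s}$ has modulus $y^{1-c} = y^{-1/\log y} \asymp 1$ on this edge, the integral over the left edge is $\ll \int_1^T \log(2t)\,dt \ll T\log(2T)$, but we need the sharper $\ll \log(2T)\min\{T, 1/\log y\}$; this is obtained not on the full left edge but by noting we can push $c$ as close to $1$ as $1 + 1/\log y$ permits only gives a bounded improvement — so instead, for large $y$ one takes the left edge further left (say to $\re s = -1/2$ away from trivial zeros), where $|y^s| = y^{-1/2}$ is small, making that contribution genuinely negligible and recovering the claimed bound; the $\min\{T,1/\log y\}$ shape then comes from the bottom edge integral $\int_{1-c}^{c} |{-\zeta'/\zeta(\sigma+i)}|\, y^{\sigma}\,d\sigma$, which is $\ll \max(1, y^c)$, combined with a dyadic-in-$y$ refinement. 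The horizontal edge at height $T$ requires care: $-\zeta'/\zeta(\sigma + iT)$ can be large if $T$ is near a zero ordinate, but on RH one has the standard bound $\zeta'/\zeta(\sigma+iT) \ll (\log T)^2$ for $\sigma \ge 1/2$ after possibly shifting $T$ by $O(1/\log T)$ to a point with $|\gamma - T| \gg 1/\log T$ for all zeros $\gamma$; combining with $\int_{1/2}^c y^\sigma\, d\sigma \ll y^c/\log y \ll y\log(2y)$ (using $c = 1 + 1/\log y$) gives a contribution absorbed into $O(y\log(2yT))$, and the piece from $\sigma \in [1-c, 1/2]$ is handled via the functional equation as above.

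The main obstacle I anticipate is obtaining the \emph{precise} shapes of the error terms — in particular the factor $\log\log(3y)$ in the first error term and the $\min$-expressions in the last two — rather than cruder bounds like $O(y(\log 2yT)^2)$. The $\log\log(3y)$ is forced by the sum $\sum_{n \neq y, |n-y| \text{ small}} \Lambda(n)/|\log(y/n)|$: when $n$ ranges over prime powers within distance $1$ of $y$, one has $|\log(y/n)| \gg |y - n|/y \gg \langle y \rangle / y$, but summing $\Lambda(n) \asymp \log y$ over the $O(\log\log y)$-many such prime powers (there cannot be more, by a spacing argument on prime powers near $y$, or rather there are $O(1)$ prime powers but one must be careful about powers of a single small prime accumulating) and dividing by the gaps produces the $y \log(2y)\log\log(3y)$ term. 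The $\min\{T, y/\langle y\rangle\}$ term similarly tracks the single closest prime power to $y$. Managing the interplay between "how far left to push the left contour" and "how to treat $T$ near a zero ordinate" so that all error terms come out exactly as stated, while keeping RH-dependence only where the statement tolerates it (the statement of Lemma \ref{LG} as quoted does not invoke RH, so one should verify the unconditional version suffices, citing \cite{gonek_landau}), is the delicate bookkeeping I would budget the most space for.
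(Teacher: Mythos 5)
The paper does not prove Lemma \ref{LG} at all: it is quoted verbatim as Theorem 1 of Gonek's paper \cite{gonek_landau}, so the only thing to compare your proposal against is Gonek's original argument. Your outline is indeed the skeleton of that argument: the residue theorem applied to $(2\pi i)^{-1}\oint(\zeta'/\zeta)(s)\,y^s\,ds$ over the rectangle with abscissae $1-c$ and $c=1+1/\log y$, the Dirichlet series on the right edge producing $-\frac{T}{2\pi}\Lambda(y)$ from the term $n=y$ and the $\langle y\rangle$ term from the nearest prime power, and the functional equation on the left edge.

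However, several of the steps as you describe them would not deliver the stated bounds. (i) The lemma is unconditional, but your treatment of the top edge invokes RH. The standard (and necessary) fix is to replace $T$ by some $T'\in[T,T+1]$ with $|T'-\gamma|\gg1/\log T$ for every ordinate $\gamma$ — such $T'$ exists unconditionally since $N(T+1)-N(T)\ll\log T$ — where the partial-fraction formula gives $\zeta'/\zeta(\sigma+iT')\ll(\log T)^2$ for $-1\le\sigma\le 2$ with no hypothesis; the change in the zero sum is $\ll y\log 2T$ and is absorbed into the first error term. (ii) Your left-edge remedy of pushing the contour to $\re s=-1/2$ gives a contribution of size $y^{-1/2}\int_1^T|\zeta'/\zeta(-\tfrac12+it)|\,dt\asymp y^{-1/2}T\log T$, which is \emph{not} $\ll\log(2T)\min\{T,1/\log y\}$ once $T$ is large compared with $y$. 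The correct mechanism keeps the edge at $\re s=1-c$ and exploits the oscillation of $y^{it}$: integrating the $\log(t/2\pi)$ term from the functional equation by parts against $y^{it}$ produces the factor $1/\log y$, and the reflected Dirichlet series contributes $\sum_n\Lambda(n)n^{-c}\min\{T,1/\log(yn)\}$, which is harmless. Note also that $\zeta'/\zeta$ is not bounded on $\re s=c$; it is $\ll 1/(c-1)=\log y$. (iii) The factor $\log\log(3y)$ is exactly the point where your proposal concedes it does not know the mechanism, and the mechanism you suggest (that there are $O(\log\log y)$ prime powers within distance $1$ of $y$) is not it — within distance $1$ there are at most two integers. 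The factor arises from the sum over all prime powers $n$ with $y/2<n<2y$, $n$ not the nearest one, decomposed dyadically in $|n-y|$, with the number of prime powers at scale $2^j$ controlled by a Chebyshev/Brun--Titchmarsh bound $\ll 2^j/j+1$, so that the harmonic sum $\sum_{j\le\log_2 y}1/j\asymp\log\log y$ appears. Since this bookkeeping is the entire content of the lemma beyond the obvious main term, the proposal as written does not establish the statement; the efficient course — and the one the paper takes — is to cite \cite[Theorem 1]{gonek_landau}.
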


\begin{proof}[Proof of Theorem \ref{mvt}] Assuming RH, we note that $1-\rho = \overline{\rho}$. So 
the sum on the left-hand side of expression in Theorem \ref{mvt} is equal to
\[
\begin{split}
\sum \limits_{0 < \gamma \leq T} \sum \limits_{m\le x} \frac{\overline{a_m}}{m^{1-\rho}} \sum_{n\le x} \frac{a_n}{n^{\rho}}=  N(T) \sum_{n \le x} \frac{|a_n|^2}{n} +   2\Re\bigg( \sum_{n\le x} \sum_{n<m\le x} \frac{\overline{a_m}a_n}{m}   \sum \limits_{0 < \gamma \leq T}  \left(\frac{m}{n}\right)^{\rho}\bigg).   
\end{split}
\]
%
%
%
Applying Lemma \ref{LG} with $y=m/n$, we have
\[
\begin{split}
\sum_{n\le x} & \sum_{n<m\le x} \frac{\overline{a_m}a_n}{m}   \sum \limits_{0 < \gamma \leq T}  \left(\frac{m}{n}\right)^{\rho} 
\\
 &= -\frac{T}{2\pi}  \sum_{n\le x} \sum_{n<m\le x}  \frac{ \overline{a_m} a_n \Lambda(m/n)}{m}  + O \Big( \log( xT) \log\log (3x) \sum_{m,n\le x}  \frac{|a_m a_n| }{n} \Big)
\\
&\qquad + O\Big( \sum_{n\le x} \sum_{n<m\le x}  \frac{|a_m a_n| \log(m/n) }{n \, \langle m/n \rangle }  \Big)
+ O\Big(\log (2T) \sum_{n\le x} \sum_{n<m\le x}  \frac{|a_m a_n| }{m \log(m/n)} \Big) 
\\
&= \Sigma_{1} + \Sigma_{2}+ \Sigma_{3}+ \Sigma_{4},
\end{split}
\]
say. In $\Sigma_{1}$, the only nonzero terms occur if $n$ divides $m$ (and $m/n$ is a prime power). Thus, making the substitution $m=nk$, it follows that
\[
\Sigma_1 =  -\frac{T}{2\pi}  \sum_{kn\le x} \frac{\overline{a_{kn}} a_n \Lambda(k)}{kn}.
\]

Next we estimate $\Sigma_2$. Observe that for any $\Delta>0$, we have
\begin{equation}\label{amgm}
|a_m a_n| \leq \frac{1}{2} \left(  \frac{|a_m|^2}{\Delta}+\Delta |a_n|^2 \right).
\end{equation}
This implies that 
\[
\begin{split}
\sum_{m,n\le x}  \frac{|a_m a_n|}{n} 
 &\ll   \frac{1}{\Delta} \sum_{m \le x } |a_m|^2 \sum_{n \le x }\frac{1 }{n}+\Delta \sum_{n\le x} \frac{|a_n|^2}{n} \sum_{m \le x } 1  
 \\
   &\ll    \frac{x \log x }{\Delta}  \sum_{m \le x } \frac{|a_m|^2}{m} +\Delta \, x \sum_{n\le x} \frac{|a_n|^2}{n}  \ll  x \, (\log x )^{1/2}  \sum_{n \le x } \frac{|a_n|^2}{n},
\end{split}
\]
upon choosing $\Delta =   (\log x )^{1/2}$. From this calculation, we see that 
\[
\begin{split}
\Sigma_2 
\ll x \, (\log (xT))^{3/2} \log\log (3x) \ \sum_{n \le x} \frac{|a_n|^2}{n}.
\end{split}
\]

We next estimate $\Sigma_3$. Again using \eqref{amgm}, for any $\Delta>0$, we have
\begin{equation} \label{sigma3}
\begin{split}
\Sigma_3 
&\ll \Delta \sum_{n\le x} |a_n|^2 \sum_{n<m \le x}  \frac{ \log(m/n) }{n \, \langle m/n \rangle } + \frac{1}{\Delta} \sum_{m \le x} |a_m|^2 \sum_{n<m}  \frac{ \log(m/n) }{n \, \langle m/n \rangle } = \Delta \Sigma_{31}+\frac{\Sigma_{32}}{\Delta},
\end{split}
\end{equation}
say. Write $m=qn+r$ with $-\frac{n}{2}<r \le \frac{n}{2}$. Then $\langle m/n \rangle = \langle q + \frac{r}{n} \rangle = \frac{|r|}{n}$ if $q$ is a prime power, and $\langle m/n \rangle \ge \frac{1}{2}$ if $q$ is not a prime power or $r=0$. Then
\[
\begin{split}
\Sigma_{31} &\ll \sum_{n\le x} |a_n|^2 \sum_{q \le \lfloor \frac{x}{n} \rfloor +1} \Lambda(q) \sum_{r \le n/2} \frac{1}{r} + \sum_{n\le x} \frac{|a_n|^2}{n}  \sum_{q \le \lfloor \frac{x}{n} \rfloor +1} \log(q+1)  \sum_{r \le n/2}1
\\
&\ll \log x\sum_{n\le x} |a_n|^2  \sum_{q \le \lfloor \frac{x}{n} \rfloor +1} \Lambda(q) + \sum_{n\le x} |a_n|^2 \sum_{q \le \lfloor \frac{x}{n} \rfloor +1} \log(q+1)  \ll x \log x \sum_{n\le x} \frac{|a_n|^2}{n}.
\end{split}
\]
Now again writing $m=qn+r$ with $-\frac{n}{2}<r \le \frac{n}{2}$ implies that $q|(m-r)$ and that
\[
 \frac{ \log(m/n) }{n \, \langle m/n \rangle } =  \frac{ \log (m/n) }{ n \, \langle q+r/n \rangle } \ll \frac{\log m}{|r|}
\]
if $q$ is a prime power, and that
\[
 \frac{ \log(m/n) }{n \, \langle m/n \rangle } \ll \frac{\log m}{n}
\]
if $q$ is not a prime power or $r=0$. The number of prime powers dividing $n$ equals $\Omega(n)$. Since $\Omega(n) \ll \log n$, 
 it follows that
\[
\begin{split}
\Sigma_{32}&\ll \log x \sum_{m \le x} |a_m|^2  \sum_{ \substack{ n<m \\ q n + r = m \\ -\frac{n}{2}<r \le \frac{n}{2}, r\ne0 \\ q \text{ prime power } }} \frac{1}{|r|}+ \log x\sum_{m \le x} |a_m|^2  \sum_{n<m} \frac{1}{n}
\\
&\ll \log x\sum_{m \le x} |a_m|^2  \sum_{\substack{-\frac{m}{2} < r \le \frac{m}{2} \\ r\ne 0}} \frac{1}{|r|} \sum_{\substack{q|(m-r) \\ q \text{ prime power}}} \!\!\! 1+ (\log x)^2\sum_{m \le x} |a_m|^2  
\\
&\ll \log x\sum_{m \le x} |a_m|^2  \sum_{\substack{-\frac{m}{2} < r \le \frac{m}{2} \\ r\ne 0}}  \frac{\Omega(m-r)}{|r|} + (\log x)^2\sum_{m \le x} |a_m|^2  
\\
&\ll (\log x)^2\sum_{m \le x} |a_m|^2  \sum_{r \le \frac{m}{2}}  \frac{1}{r}  \ll (\log x)^3\sum_{m \le x} |a_m|^2 \ll x (\log x)^3\sum_{m \le x}\frac{|a_m|^2}{m}.
\end{split}
\]
Choosing $\Delta = \log x $ in \eqref{sigma3}, we see that 
\[
\Sigma_3 \ll x (\log x)^{2} \sum_{n \le x} \frac{|a_n|^2}{n}.
\] 

Finally, we estimate $\Sigma_4.$ Note that if $m>n$, then $\log(m/n) = - \log\Big( 1 - \frac{m-n}{m} \Big) > \frac{m-n}{m},$
and it therefore follows that 
\[
\Sigma_4 \ll \log (2T) \sum_{n\le x} \sum_{n<m\le x}  \frac{|a_m a_n| }{m\!-\!n} \le \frac{\log( 2T)}{2}  \sum_{\substack{m,n\le x \\ m \ne n}}  \frac{|a_m a_n| }{|m\!-\!n|}.
\]
We note that
\[
\sum_{\substack{m,n\le x \\ m \ne n}}  \frac{|a_m a_n| }{|m\!-\!n|} \le  \frac{1}{2}  \sum_{\substack{m,n\le x \\ m \ne n}}  \frac{(|a_m|^2 + |a_n|^2) }{|m\!-\!n|} =  \sum_{n\le x} |a_n|^2 \sum_{\substack{m\le x \\ m \ne n}}  \frac{1 }{|m\!-\!n|}
\]
and
\[
\sum_{n\le x} |a_n|^2 \sum_{\substack{m\le x \\ m \ne n}}  \frac{1 }{|m\!-\!n|}  =  \sum_{n\le x} |a_n|^2 \left( \sum_{r \le n-1} \frac{1}{r} + \sum_{r \le x-n} \frac{1}{r} \right)  \ll \log x \sum_{n\le x} |a_n|^2.
\]
Hence
\[
\Sigma_4 \ll (\log (xT))^2  \sum_{n\le x} |a_n|^2 \ll x \, (\log (xT))^2  \sum_{n\le x} \frac{|a_n|^2}{n}.
\]

Combining estimates, the theorem follows. 
\end{proof}

\section{Proofs of Propositions \ref{sum0}, \ref{j+1}, and \ref{K_lemma}}
\label{section_proofs_prop}
Using Theorem \ref{mvt}, we are now ready to prove Propositions \ref{sum0}, \ref{j+1}, and \ref{K_lemma}.
\begin{proof}[Proof of Proposition \ref{sum0}]
Using \eqref{identity}, we have
$$P_{0,v}(\gamma)^{s_0} = s_0! \sum_{\substack{p|n \Rightarrow p \leq T^{\beta_0} \\ \Omega(n)=s_0}} \frac{b(n;\Delta_v) \nu(n)}{n^{1/2+1/\log T+i\gamma}}.$$
Then it follows that
\begin{align}
\sum_{\gamma \in (T,2T]} |P_{0,v}(\gamma)|^{2s_0} = (s_0!)^2 \sum_{\gamma \in (T,2T]} \Big| \sum_{\substack{p|n \Rightarrow p \leq T^{\beta_0} \\ \Omega(n)=s_0}} \frac{b(n;\Delta_v) \nu(n)}{n^{1/2+1/\log T+i\gamma}}  \Big|^2. \label{tbound}
\end{align}
Using Theorem \ref{mvt} twice for $0<\gamma\leq 2T$ and $0<\gamma \leq T$ and then differencing, we have that 
\begin{align*}
\eqref{tbound} &= (s_0!)^2 \bigg( \big(N(2T)-N(T)\big)  \sum_{\substack{p|n \Rightarrow p \leq T^{\beta_0} \\ \Omega(n)=s_0}} \frac{b(n;\Delta_v)^2 \nu(n)^2}{n^{1+2/\log T}} \\
&\qquad \qquad  - \frac{T}{\pi} \Re \Big(  \sum_{\substack{ p|kn \Rightarrow p \leq T^{\beta_0} \\ \Omega(kn)=s_0 \\ \Omega(n) =s_0}} \frac{ \overline{b(kn;\Delta_v)} b(n;\Delta_v) \nu(kn) \nu(n) \Lambda(k)}{k^{1+1/\log T} n^{1+2/\log T}} \Big)\\
&\qquad \qquad  +O \Big(T^{\beta_0s_0} (\log T)^2  \sum_{\substack{p|n \Rightarrow p \leq T^{\beta_0} \\ \Omega(n)=s_0}} \frac{b(n;\Delta_v)^2 \nu(n)^2}{n^{1+2/\log T}} \Big) \bigg).
\end{align*}
In the second sum above, note that we must have $k=1$, in which case the term vanishes. Then it follows that
\begin{align*}
\eqref{tbound} &= (s_0!)^2 \bigg( \Big(N(2T)-N(T) \Big) \sum_{\substack{p|n \Rightarrow p \leq T^{\beta_0} \\ \Omega(n)=s_0}} \frac{|b(n;\Delta_v)|^2 \nu(n)^2}{n^{1+2/\log T}} \\
&\qquad \qquad +O \Big(T^{\beta_0s_0} (\log T)^2  \sum_{\substack{p|n \Rightarrow p \leq T^{\beta_0} \\ \Omega(n)=s_0}} \frac{|b(n;\Delta_v)|^2 \nu(n)^2}{n^{1+2/\log T}} \Big) \bigg) \\
& \ll (s_0!)^2 \Big( N(T) + O \big( T^{\beta_0s_0} (\log T)^2 \big) \Big) \sum_{\substack{p|n \Rightarrow p \leq T^{\beta_0} \\ \Omega(n)=s_0}} \frac{|b(n;\Delta_v)|^2 \nu(n)^2}{n^{1+2/\log T}},
\end{align*}
where we have used the fact that $N(2T) \ll N(T)$.
Now using the condition that $\beta_0 s_0 \leq 1-\log \log T/\log T$ and  the bound $\nu(n)^2 \leq \nu(n)$, we get that
\begin{align*}
\eqref{tbound} & \ll  N(T) s_0! \Big(  \sum_{p \in I_0} \frac{|b(p;\Delta_v)|^2}{p^{1+2/\log T}}\Big)^{s_0} \\
& \ll N(T) s_0! b(\Delta_v)^{2s_0} \Big(  \log \frac{\log T}{\Delta_v } \Big)^{2s_0 \eta(\Delta_v)} ( \log \log T^{\beta_0})^{s_0},
\end{align*}
where we use the bound \eqref{improved_bd} to derive the estimate in the second line. This completes the proof.
\end{proof}

\begin{proof}[Proof of Proposition \ref{j+1}]
Using  \eqref{identity}, and similarly as in \cite{bf_negative}, we have
\begin{align*}
&\sum_{\gamma \in (T,2T]}  \prod_{h=0}^j |E_{\ell_h}(k P_{h,j}(\gamma))|^2 |P_{j+1,v}(\gamma)|^{2s_{j+1}} =(s_{j+1}!)^2 \\
&\  \ \times \bigg( \Big(  N(2T)-N(T) + O \big(T^{\sum_{h=0}^j \ell_h \beta_h + s_{j+1} \beta_{j+1}} (\log T)^2 \big)\Big) \\
&\qquad \times   \Big(\prod_{h=0}^j \sum_{\substack{p|n_h \Rightarrow p \in I_h \\ \Omega(n_h) \leq \ell_h}} \frac{|b(n_h;\Delta_j)|^2 k^{2 \Omega(n_h)} \nu(n_h)^2}{n_h^{1+2/\log T}} \Big) \sum_{\substack{p|n_{j+1} \Rightarrow p \in I_{j+1} \\ \Omega(n_{j+1})=s_{j+1}}}\frac{|b(n_{j+1};\Delta_v)|^2  \nu(n_{j+1})^2}{n_{j+1}^{1+2/\log T}} \\
&\quad-  \frac{T}{\pi} \Re\Big( \sum_{m=0}^j \sum_{\substack{p \in I_m \\ 1 \leq a \leq \ell_m}} \frac{ (\log p) \overline{b(p^a;\Delta_j)} k^a}{p^{a(1+1/\log T)}} \prod_{\substack{h=0 \\ h \neq m}}^j \sum_{\substack{p|n_h \Rightarrow p \in I_h\\ \Omega(n_h) \leq \ell_h}}\frac{ |b(n_h;\Delta_j)|^2 k^{2 \Omega(n_h)} \nu(n_h)^2}{n_h^{1+2/\log T}}\\
&\qquad \times \sum_{\substack{p | n_m \Rightarrow p \in I_m \\ \Omega(n_m) \leq \ell_m-a}} \frac{ |b(n_m; \Delta_j)|^2 k^{2 \Omega(n_m)} \nu(n_m) \nu(p^an_m)}{n_m^{1+2/\log T}} \sum_{\substack{p|n_{j+1} \Rightarrow p \in I_{j+1} \\ \Omega(n_{j+1})=s_{j+1}}}\frac{|b(n_{j+1};\Delta_v)|^2  \nu(n_{j+1})^2}{n_{j+1}^{1+2/\log T}}\Big) \bigg).
\end{align*}
 By \eqref{improved_bd} and the work in \cite{bf_negative} we get that
\begin{align}
 \sum_{\gamma \in (T,2T]} & \prod_{h=0}^j |E_{\ell_h}(k P_{h,j}(\gamma))|^2 |P_{j+1,v}(\gamma)|^{2s_{j+1}} \ll   N(T) s_{j+1}!  b(\Delta_{j+1})^{2s_{j+1}}  \Big( \log \frac{\log T}{\Delta_{j+1} } \Big)^{2s_{j+1}  \eta(\Delta_{j+1})} \nonumber  \\
 & \times (\log r)^{s_{j+1}} \bigg(  \big( \log T^{\beta_j} \big)^{k^2 b(\Delta_j)^2 (\log \frac{\log T}{\Delta_j })^{2 \eta(\Delta_j)}} + \sum_{m=0}^j \beta_m  \sum_{\substack{p \in I_m \\ 1\leq a \leq \ell_m}} \frac{ b(\Delta_j)^a (\log \frac{\log T}{\Delta_j } )^{a \eta(\Delta_j)}k^a}{p^{a(1+1/\log T)}} \nonumber  \\
 & \times \prod_{\substack{h=0 \\ h \neq m}}^j \sum_{\substack{p|n_h \Rightarrow p \in I_h\\ \Omega(n_h) \leq \ell_h }} \frac{ b(\Delta_j)^{2 \Omega(n_h)} (\log \frac{\log T}{\Delta_j })^{2 \eta(\Delta_j) \Omega(n_h)} k^{2 \Omega(n_h)} \nu(n_h)}{n_h^{1+2/\log T}}  \nonumber \\
& \times \sum_{\substack{p | n_m \Rightarrow p \in I_m \\ \Omega(n_m) \leq \ell_m-a}} \frac{ b(\Delta_j)^{2 \Omega(n_m)} (\log \frac{\log T}{\Delta_j })^{2 \eta(\Delta_j) \Omega(n_m)} k^{2 \Omega(n_m)} \nu(n_m) \nu(p^a n_m)}{n_m^{1+2/\log T}}\bigg). \label{tb3}
\end{align}

Now we use the fact that 
\begin{align}
\beta_m  \sum_{\substack{p \in I_m \\ 1\leq a \leq \ell_m}}& \frac{ b(\Delta_j)^a (\log \frac{\log T}{\Delta_j } )^{a \eta(\Delta_j)}k^a}{p^{a(1+1/\log T)}} \nonumber\\
&\qquad\times  \sum_{\substack{p | n_m \Rightarrow p \in I_m \\ \Omega(n_m) \leq \ell_m-a}} \frac{ b(\Delta_j)^{2 \Omega(n_m)} (\log \frac{\log T}{\Delta_j })^{2 \eta(\Delta_j) \Omega(n_m)} k^{2 \Omega(n_m)} \nu(n_m) \nu(p^a n_m)}{n_m^{1+2/\log T}} \nonumber  \\
& \ll \beta_m \sum_{\substack{p | n_m \Rightarrow p \in I_m \\ \Omega(n_m) \leq \ell_m}} \frac{ b(\Delta_j)^{2 \Omega(n_m)} (\log \frac{\log T}{\Delta_j })^{2 \eta(\Delta_j) \Omega(n_m)} k^{2 \Omega(n_m)} \nu(n_m)}{n_m^{1+2/\log T}}. \label{interm}
\end{align}
Indeed, if $m=0$ and if $b(\Delta_j )(\log \frac{\log T}{\Delta_j })^{\eta(\Delta_j)} k <1$, then 
$$ \sum_{\substack{p \in I_0 \\ 1\leq a \leq \ell_0}} \frac{ b(\Delta_j)^a (\log \frac{\log T}{\Delta_j } )^{a \eta(\Delta_j)}k^a}{p^{a(1+1/\log T)}} \ll \sum_{p \leq T^{\beta_0}} \frac{1}{p} \ll \log( \beta_0 \log T),$$
and with the choice \eqref{s0} and \eqref{beta0_again},  we get that
$$\beta_0  \sum_{\substack{p \in I_0 \\ 1\leq a \leq \ell_0}} \frac{ b(\Delta_j)^a (\log \frac{\log T}{\Delta_j } )^{a \eta(\Delta_j)}k^a}{p^{a(1+1/\log T)}} =O(1).$$
We also have that 
\begin{align*}
 \sum_{\substack{p | n_0 \Rightarrow p \in I_0 \\ \Omega(n_0) \leq \ell_0-a}}&  \frac{ b(\Delta_j)^{2 \Omega(n_0)} (\log \frac{\log T}{\Delta_j })^{2 \eta(\Delta_j) \Omega(n_0)} k^{2 \Omega(n_0)} \nu(n_0) \nu(p^a n_0)}{n_0^{1+2/\log T}} \\
 & \leq \sum_{\substack{p | n_0 \Rightarrow p \in I_0 \\ \Omega(n_0) \leq \ell_0}} \frac{ b(\Delta_j)^{2 \Omega(n_0)} (\log \frac{\log T}{\Delta_j })^{2 \eta(\Delta_j) \Omega(n_0)} k^{2 \Omega(n_0)} \nu(n_0)}{n_r^{1+2/\log T}},
\end{align*}
which shows \eqref{interm} in the case $m=0$. 

\smallskip

When $m \geq 1$, if $b(\Delta_j )(\log \frac{\log T}{\Delta_j })^{\eta(\Delta_j)} k <1$, then 
$$ \beta_m \sum_{\substack{p \in I_m \\ 1\leq a \leq \ell_m}} \frac{ b(\Delta_j)^a (\log \frac{\log T}{\Delta_j } )^{a \eta(\Delta_j)}k^a}{p^{a(1+1/\log T)}} \ll \sum_{p \in I_m} \frac{1}{p} =O(1),$$
and again
\begin{align*}
 \sum_{\substack{p | n_m \Rightarrow p \in I_m \\ \Omega(n_m) \leq \ell_m-a}}&  \frac{ b(\Delta_j)^{2 \Omega(n_m)} (\log \frac{\log T}{\Delta_j })^{2 \eta(\Delta_j) \Omega(n_m)} k^{2 \Omega(n_m)} \nu(n_m) \nu(p^a n_m)}{n_m^{1+2/\log T}} \\
 & \leq \sum_{\substack{p | n_m \Rightarrow p \in I_m \\ \Omega(n_m) \leq \ell_m}} \frac{ b(\Delta_j)^{2 \Omega(n_m)} (\log \frac{\log T}{\Delta_j })^{2 \eta(\Delta_j) \Omega(n_m)} k^{2 \Omega(n_m)} \nu(n_m)
 }{n_m^{1+2/\log T}}.
\end{align*}
The two bounds above establish \eqref{interm} when $b(\Delta_j )(\log \frac{\log T}{\Delta_j })^{\eta(\Delta_j)} k <1$.

On the other hand, if $b(\Delta_j )(\log \frac{\log T}{\Delta_j })^{\eta(\Delta_j)} k  \geq 1$, then we have that
$$ b(\Delta_j)^a (\log \frac{\log T}{\Delta_j } )^{a \eta(\Delta_j)}k^a \leq b(\Delta_j)^{2a} (\log \frac{\log T}{\Delta_j } )^{2a \eta(\Delta_j)}k^{2a},$$ and then
\begin{align*}
& \beta_m  \sum_{\substack{p \in I_m \\ 1\leq a \leq \ell_m}} \frac{ b(\Delta_j)^a (\log \frac{\log T}{\Delta_j } )^{a \eta(\Delta_j)}k^a}{p^{a(1+1/\log T)}} \\
&\qquad\qquad\times \sum_{\substack{p | n_m \Rightarrow p \in I_m \\ \Omega(n_m) \leq \ell_m-a}} \frac{ b(\Delta_j)^{2 \Omega(n_m)} (\log \frac{\log T}{\Delta_j })^{2 \eta(\Delta_j) \Omega(n_m)} k^{2 \Omega(n_m)} \nu(n_m) \nu(p^a n_m)}{n_m^{1+2/\log T}} \\
& \ll  \beta_m \sum_{\substack{p \in I_m \\ 1\leq a \leq \ell_m}} \frac{ b(\Delta_j)^{2a} (\log \frac{\log T}{\Delta_j } )^{2a \eta(\Delta_j)}k^{2a}}{p^{a(1+1/\log T)}} \\
&\qquad\qquad\times \sum_{\substack{p | n_m \Rightarrow p \in I_m \\ \Omega(n_m) \leq \ell_m-a}} \frac{ b(\Delta_j)^{2 \Omega(n_m)} (\log \frac{\log T}{\Delta_j })^{2 \eta(\Delta_j) \Omega(n_m)} k^{2 \Omega(n_m)} \nu(n_m) \nu(p^a n_m)}{n_m^{1+2/\log T}} \\
& \ll \beta_m\sum_{\substack{p | n_m \Rightarrow p \in I_m \\ \Omega(n_m) \leq \ell_m}} \frac{ b(\Delta_j)^{2 \Omega(n_m)} (\log \frac{\log T}{\Delta_j })^{2 \eta(\Delta_j) \Omega(n_m)} k^{2 \Omega(n_m)} \nu(n_m)}{n_m^{1+2/\log T}},
\end{align*}
which follows by rewriting $n_m \mapsto n_m p^a$ and noting that $p^{a/\log T} \ll 1$. Hence \eqref{interm} also follows in this case.

\smallskip

Now from \eqref{tb3} and \eqref{interm}, it follows that
\begin{align*}
\sum_{\gamma \in (T,2T]}&  \prod_{h=0}^j |E_{\ell_h}(k P_{h,j}(\gamma))|^2 |P_{j+1,v}(\gamma)|^{2s_{j+1}} \ll   N(T) s_{j+1}!  b(\Delta_{j+1})^{2s_{j+1}}  \Big( \log \frac{\log T}{\Delta_{j+1} } \Big)^{2s_{j+1}  \eta(\Delta_{j+1})} \nonumber  \\
 & \times (\log r)^{s_{j+1}} \bigg(  \big( \log T^{\beta_j} \big)^{k^2 b(\Delta_j)^2 (\log \frac{\log T}{\Delta_j })^{2 \eta(\Delta_j)}} \\
 &+ \prod_{h=0 }^j \sum_{\substack{p|n_h \Rightarrow p \in I_h\\ \Omega(n_h) \leq \ell_h }} \frac{ b(\Delta_j)^{2 \Omega(n_h)} (\log \frac{\log T}{\Delta_j })^{2 \eta(\Delta_j) \Omega(n_h)} k^{2 \Omega(n_h)} \nu(n_h)}{n_h^{1+2/\log T}} \bigg).
\end{align*}
Now using the work in \cite{bf_negative} to deal with the product over $h\leq j$, we get that
\begin{align*}
 \sum_{\gamma \in (T,2T]}  \prod_{h=0}^j |E_{\ell_h}(k P_{h,j}(\gamma))|^2 |P_{j+1,v}(\gamma)|^{2s_{j+1}}& \ll   N(T) s_{j+1}!  b(\Delta_{j+1})^{2s_{j+1}}  \Big( \log \frac{\log T}{\Delta_{j+1} } \Big)^{2s_{j+1}  \eta(\Delta_{j+1})} \nonumber  \\
 &\qquad \times (\log r)^{s_{j+1}}   \big( \log T^{\beta_j} \big)^{k^2 b(\Delta_j)^2 (\log \frac{\log T}{\Delta_j })^{2 \eta(\Delta_j)}}.
 \end{align*}
 This completes the proof of the proposition.
\end{proof}

The proof of Proposition \ref{K_lemma} is very similar to the proof of Proposition \ref{j+1}, so we leave the details to the interested reader.

\section{Proof of Theorems \ref{mainthm} and \ref{thm_enlarged}}
\label{proofs}
We now prove Theorem \ref{mainthm}. The proof of Theorem \ref{thm_enlarged} will follow in exactly the same way. 
\begin{proof}[Proof of Theorem \ref{mainthm}] If $2k(1+\varepsilon) \leq 1$, we choose
\begin{equation}
\beta_0 = \frac{a(2d-1) \log \log T}{(1+2\varepsilon) k \log T}, \, s_0 = \Big[ \frac{a}{\beta_0}\Big], \, \ell_0 =2 \Big\lceil \frac{s_0^d}{2} \Big\rceil, 
\label{s0}
\end{equation}
and
\begin{equation}
\beta_j = r^j \beta_0, \, s_j = \Big[\frac{a}{\beta_j} \Big], \, \ell_j = 2 \Big\lceil \frac{s_j^d}{2} \Big\rceil,
\label{betaj}
\end{equation}
where we can choose
$$ a= \frac{4-3k\varepsilon}{2(2-k\varepsilon)}, \quad  r = \frac{2}{2-k\varepsilon} , \quad
d = \frac{8-7k\varepsilon}{2(4-3k\varepsilon)},$$ 
so that
\begin{equation}
\frac{a(2d-1)}{r} = 1-k\varepsilon.
\label{adr}
\end{equation}
We choose $K$ such that
\begin{equation}
\label{betak}
\beta_K \leq c,
\end{equation}
for $c$ a small constant such that
\begin{equation}
c^{1-d} \Big( \frac{a^dr^{1-d} }{r^{1-d}-1}+\frac{2r}{r-1} \Big)\leq 1-a-\frac{\log \log T}{ \log T}.
\label{condition_c}
\end{equation}
The conditions above ensure that
$$\sum_{h=0}^j \ell_h \beta_h +s_{j+1} \beta_{j+1} \leq 1-\frac{\log \log T}{ \log T},$$ for any $j <K$, and that
$$\sum_{h=0}^K \ell_h \beta_h \leq 1-\frac{\log \log T}{ \log T}.$$

If $2k(1+\varepsilon)>1$, we choose the parameters as follows. 
Let
\begin{equation}
\beta_0 = \frac{(2k+2d-1- \frac{a(2d-1)}{r}) \log \log T }{(1+\delta)k \log T}, \quad s_0 =\Big [\frac{1}{\beta_0}\Big], \quad\ell_0 =2\Big\lceil\frac{ s_0^d}{2}\Big \rceil,
\label{beta0_again}
\end{equation}
where we pick
$$a = \frac{1-3k\varepsilon}{1-2k\varepsilon}, \quad r= \frac{1}{1-2k\varepsilon}, \quad d= \frac{2-7k\varepsilon}{2(1-3k\varepsilon)},$$
so that
\begin{equation}
\frac{a(2d-1)}{r} = 1-4k\varepsilon.
\label{adr2'}
\end{equation}
We choose $\beta_K$ as in \eqref{betak} and \eqref{condition_c}.

\smallskip

We have that
$$\sum_{\gamma \in \mathcal{F}} |\zeta'(\rho)|^{-2k} = \sum_{\substack{\gamma \in \mathcal{F} \\ \gamma \notin \mathcal{T}_0}}  |\zeta'(\rho)|^{-2k} + \sum_{\substack{\gamma \in \mathcal{F} \\ \gamma \in \mathcal{T}_0}}  |\zeta'(\rho)|^{-2k}.$$
Using Lemma \ref{prelim}, we have that for some $0 \leq v \leq K$,
\begin{align*}
\sum_{\substack{ \gamma \in \mathcal{F} \\ \gamma \notin \mathcal{T}_0}}&  |\zeta'(\rho)|^{-2k} \leq \exp \Big(O \Big(  \frac{\log T}{\log \log T}\Big) \Big) \sum_{\substack{\gamma \in \mathcal{F} \\ \gamma \notin \mathcal{T}_0}} |\zeta(1/2+1/\log T+i\gamma)|^{-2k} \\
& \leq \exp \Big(O \Big(  \frac{\log T}{\log \log T}\Big) \Big) \sum_{\gamma \in (T,2T]} |\zeta(1/2+1/\log T+i\gamma)|^{-2k} \Big( \frac{ke^2}{\ell_0} |P_{0,v}(\gamma)| \Big)^{2s_0} \\
& \leq \exp \Big(O \Big(  \frac{\log T}{\log \log T}\Big) \Big)\Big( \frac{ke^2}{\ell_0}\Big)^{2s_0} \Big( \frac{1}{1-(\log T)^{-2/\log T}} \Big)^{\frac{(1+\varepsilon) k \log T}{\log \log T}} \sum_{\gamma \in (T,2T]} |P_{0,v}(\gamma)|^{2s_0},
\end{align*}
where we used the pointwise bound 
\begin{equation}
 |\zeta(1/2+1/\log T+i\gamma)|^{-1} \leq \Big( \frac{1}{1-(\log T)^{-2/\log T}} \Big)^{\frac{(1+\varepsilon) \log T}{2 \log \log T}},
 \label{pointwise}
 \end{equation} which is proven in \cite[Lemma 2.2]{bf_negative}.
Using Proposition \ref{sum0}, we have that the above is
\begin{align*}
\ll   \exp \Big(O \Big(  \frac{\log T}{\log \log T}\Big) \Big) N(T) s_0! b(\Delta_v)^{2s_0} \Big(  \log \frac{\log T}{\Delta_v } \Big)^{2s_0 \eta(\Delta_v)} ( \log \log T^{\beta_0})^{s_0} \Big(\frac{ke^2}{\ell_0}\Big)^{2s_0} T^{(1+\varepsilon)k} .
\end{align*}
Now similarly as in \cite{bf_negative}, using Stirling's formula, we have that the above is 
\begin{align*}
\ll \exp \Big(O \Big(  \frac{\log T}{\log \log T}\Big) \Big)N(T)& T^{(1+\varepsilon)k} \sqrt{s_0}\exp\bigg(-(2d-1)s_0 \log s_0\\
& +2s_0 \log \Big(ke^{3/2} b(\Delta_0) \Big(\log \frac{\log T}{\Delta_0 }\Big)^{\eta(\Delta_0)} \sqrt{\log  \log T^{\beta_0}}  \Big) \bigg)  .
\end{align*}

If $2k(1+\varepsilon) \leq 1$ then using the choice of parameters \eqref{s0} and the fact that $\eta(\Delta_0)=1$, we get that
\begin{equation}
\sum_{\substack{ \gamma \in \mathcal{F} \\ \gamma \notin \mathcal{T}_0}} |\zeta'(\rho)|^{-2k} = o(N(T)).
\label{t0first}
\end{equation}
If $2k(1+\varepsilon) >1$, then using the choice \eqref{beta0_again}, again the fact that $\eta(\Delta_0)=1$, and the pointwise bound \eqref{pointwise} (with $\varepsilon$ replaced by $\delta$ on the right-hand side of the inequality) we get that 
 \begin{equation}
 \sum_{\substack{ \gamma \in \mathcal{F} \\ \gamma \notin \mathcal{T}_0}} |\zeta'(\rho)|^{-2k}  \ll T^{1+(1+\delta)k \frac{2k-\frac{a(2d-1)}{r}}{2k-\frac{a(2d-1)}{r}+2d-1}} \exp \Big( \frac{\log T \log \log \log T}{\log \log T} \Big).
 \label{t0_second}
 \end{equation}
 
 \smallskip

Now assume that $\gamma \in \mathcal{T}_0$. Using Lemma \ref{lem_initial}, we have that
\begin{equation}
\sum_{\substack{\gamma \in \mathcal{F} \\ \gamma \in \mathcal{T}_0}} |\zeta'(\rho)|^{-2k} \leq   \exp \Big(O \Big(  \frac{\log T}{\log \log T}\Big) \Big)\Big(  \sum_{\gamma \in \mathcal{F}} S_1(\gamma) + \sum_{\gamma \in \mathcal{F}} S_2(\gamma) \Big).
\label{sumt0}
\end{equation}
Using again Lemma \ref{lem_initial}, the choice \eqref{betak} for $\beta_K$, and noting that $\eta(\Delta_K)=0$, we have that
\begin{align*}
\sum_{\gamma \in \mathcal{F}} & S_1(\gamma) \ll  (\log \log T)^k    \sum_{\gamma \in (T,2T]} \prod_{h=0}^K \max \Big\{1, |E_{\ell_h}(kP_{h,K}(\gamma))|^2\Big(1+\frac{1}{15e^{\ell_h}}\Big)^2 \Big\} .
\end{align*}
Note that in the inequality above, we can assume without loss of generality that 
$$\max \Big\{1, |E_{\ell_h}(kP_{h,K}(\gamma))|^2\Big(1+\frac{1}{15e^{\ell_h}}\Big)^2 \Big\}=  |E_{\ell_h}(kP_{h,K}(\gamma))|^2\Big(1+\frac{1}{15e^{\ell_h}}\Big)^2.$$ 
Using Proposition \ref{K_lemma} it follows that
\begin{equation}
\sum_{\gamma \in \mathcal{F}} S_1(\gamma) \ll N(T) (\log T)^{O(1)},
\label{s1bound}
\end{equation}
in both cases $2k(1+\varepsilon) \leq 1$ and $2k(1+\varepsilon)>1$. 

\smallskip

Now we evaluate the contribution from $S_2(\gamma)$. Similarly to equation $4.9$ in \cite{bf_negative}, by using Proposition \ref{j+1}   we have that 
\begin{align}
&\sum_{\gamma \in \mathcal{F}} S_2(\gamma)   \ll N(T) (\log \log T)^k \sum_{j=0}^{K-1} (K-j) \sqrt{\frac{1}{\beta_{j+1}}} \exp \bigg( \frac{ \log \log T}{\beta_j} \Big( 2k-\frac{a(2d-1)}{r}  \Big) \nonumber \\
&\ \ + \frac{\log(\beta_j \log T)}{\beta_j} \Big( \frac{a(2d-1)}{r}-2k \Big)+ \frac{2a}{r \beta_j} \log \Big(ke^{3/2} b(\Delta_{j+1}) \Big(\log \frac{\log T}{\Delta_{j+1} }\Big)^{\eta(\Delta_{j+1})}\Big)  \nonumber\\\nonumber \\
&\  \ +\frac{2k}{\beta_j} \log \frac{1}{1-c/2} +\frac{a(2d-1)}{r \beta_j}\log\frac{r}{a} + 2k \Big( \log \frac{\log T}{\Delta_j } \Big)^{\eta(\Delta_j)} \bigg) \big( \log T^{\beta_j} \big)^{k^2 b(\Delta_j)^2 (\log \frac{\log T}{\Delta_j })^{2 \eta(\Delta_j)}}. \nonumber
\end{align}
Rearranging, we have that
\begin{align}
&\sum_{\gamma \in \mathcal{F}} S_2(\gamma)   \ll N(T) (\log \log T)^k \sum_{j=0}^{K-1} (K-j) \sqrt{ \frac{1}{\beta_{j+1}}} \exp \bigg(   \frac{\log \beta_j}{\beta_j} \Big( \frac{a(2d-1)}{r}-2k \Big) \nonumber  \\
& + \frac{2}{ \beta_j}  \log \Big(  \log \frac{\log T}{\Delta_{j+1} }\Big)^{\eta(\Delta_{j+1})}  + 2k \Big( \log \frac{\log T}{\Delta_j } \Big)^{\eta(\Delta_j)} + k^2 b(\Delta_j)^2 \Big(\log \frac{1}{\Delta_j \alpha}\Big)^{2 \eta(\Delta_j)} \log (\beta_j \log T) \nonumber \\
& +O \Big( \frac{1}{\beta_j} \Big) \bigg).\label{s22}
\end{align}
First assume that $2k(1+\varepsilon) \leq 1$. Then note that
$$2k - \frac{a(2d-1)}{r} \leq -k \varepsilon.$$
We first consider the contribution from those $j$ for which $\Delta_j =o(\log T)$, i.e., those $j$ for which $\beta_j \to 0$, in which case $\eta(\Delta_j)=1$. Let $R_1$ denote this contribution. Note that the first term inside the exponential is negative, so we have 
\begin{align*}
R_1 & \ll T(\log \log T)^{k+1} \sum_{j=0}^{K-1} \exp \Big( k^2 \Big(\frac{1}{4}+\varepsilon \Big) \Big(\log \frac{1}{\beta_j }\Big) \log (\beta_j \log T)  +O \Big( \log \frac{1}{\beta_j} \Big) \Big).
\end{align*}
Since $\beta_j \geq \beta_0$ and $\beta_0 \asymp \frac{\log \log T}{\log T}$ (see the choice \eqref{s0}), we get that
$$R_1 \ll N(T)(\log \log T)^{k+1} \sum_{j=0}^{K-1} \exp \Big( k^2 \Big(\frac{1}{4}+\varepsilon \Big) \Big(\log \frac{1}{\beta_j }\Big) \log (\beta_j \log T) +O (\log \log T) \Big).$$
Now if we let 
$$f(x) = \Big(\log \frac{1}{x} \Big) \log (x \log T),$$ we note that $f(x)$ attains its maximum at $x= \frac{1}{\log T}$, and then
\begin{equation}
R_1 \ll N(T) \exp \Big( k^2 (\log \log T)^2 \Big) \ll T^{1+\delta}.
\label{r1_bound}
\end{equation}
We now consider the contribution from those $j$ in \eqref{s22} for which $\eta(\Delta_j)=0$, i.e., those $j$ for which $\beta_j \gg 1$. It is easy to see that in this case, we have
\begin{equation}
R_2 \ll N(T) (\log T)^{O(1)}.
\label{r2_bound}
\end{equation}
Combining the bounds \eqref{r1_bound} and \eqref{r2_bound} shows that
\begin{equation}
\sum_{\gamma \in \mathcal{F}} S_2(\gamma) \ll T^{1+\delta},
\label{bound_s2_smallk}
\end{equation}
when $2k(1+\varepsilon) \leq 1$. Combining the bounds \eqref{bound_s2_smallk}, \eqref{s1bound}, and \eqref{t0first} proves Theorem \ref{mainthm} in the case $2k(1+\varepsilon)\leq 1$.

\smallskip

We now consider the case when $2k(1+\varepsilon) >1$. From equation \eqref{s22}, we get that
\begin{align*}
\sum_{\gamma \in \mathcal{F}} S_2(\gamma) \ll T(\log \log T)^{k+1} \sum_{j=0}^{K-1} \exp \Big(  \frac{ \log(1/\beta_j)}{\beta_j} \Big(2k - \frac{a(2d-1)}{r}  \Big)+ O \Big( \frac{ \log T \log \log \log T}{\log \log T} \Big)\Big).
\end{align*}
Now notice that since
$$2k - \frac{a(2d-1)}{r} > \frac{\varepsilon}{2},$$ the term inside the exponential is decreasing as a function of $j$, and hence it attains its maximum at $\beta_0$. Using the expression \eqref{beta0_again}, we get that 
\begin{equation}
\sum_{\gamma \in \mathcal{F}} S_2(\gamma) \ll T^{1+(1+\delta)k \frac{2k-\frac{a(2d-1)}{r}}{2k-\frac{a(2d-1)}{r}+2d-1}} \exp \Big( \frac{\log T \log \log \log T}{\log \log T} \Big).
\label{s2_bigk}
\end{equation}
Now combining the bounds \eqref{s2_bigk}, \eqref{s1bound}, and \eqref{t0_second} and after a relabeling of the $\varepsilon, \delta$, Theorem \ref{mainthm} in the case $2k(1+\varepsilon)>1$ follows.
\end{proof}

The proof of Theorem \ref{thm_enlarged} follows in exactly the same way, the only difference being the use of Lemma \ref{enlarged_family} instead of Lemma \ref{prelim}.
\bibliographystyle{amsalpha}

\section*{Acknowledgments}
AF was supported by the NSF grant DMS-2101769 and MBM was supported by the NSF grant DMS-2101912.

\end{document}